\renewcommand{\overset}[2]{%
  \mathop{#2}\limits^{\vbox to -.1ex{%
  \kern -0.4ex\hbox{$\scriptstyle #1$}\vss}}}
\begin{document}

\title{A Structure Preserving Scheme for the Kolmogorov-Fokker-Planck Equation}
\date{\today}

\author{
    Erich L Foster\footnotemark[1], J\'er\^ome Loh\'eac\footnotemark[2] and Minh-Binh Tran\footnotemark[1]}

\maketitle

\footnotetext[1]{Basque Center for Applied Mathematics, 48009 Bilbao, Basque Country - Spain ({\tt efoster@bcamath.org}, {\tt tbinh@bcamath.org}).}
\footnotetext[2]{LUNAM Universit\'e, IRCCyN UMR CNRS 6597 (Institut de Recherche en Communications et Cybern\'etique de Nantes), \'Ecole des Mines de Nantes, 4 rue Alfred Kastler, 44307 Nantes - France ({\tt Jerome.Loheac@irccyn.ec-nantes.fr})}

\begin{abstract}
    In this paper we introduce a numerical scheme which preserves the long time
    behavior of solutions to the Kolmogorov equation. The method presented is
    based on a self-similar change of variables technique to transform the
    Kolmogorov equation into a new form, such that the problem of designing
    structure preserving schemes, for the original equation, amounts to building
    a standard scheme for the transformed equation.  We also present an analysis for the operator splitting technique for the self-similar method and numerical results  for the described scheme.
\end{abstract}

\begin{keywords}
    Kolmogorov equation, long time asymptotic, structure preserving scheme,
    self-similarity
\end{keywords}

\begin{AMS}
    65M12, 65M22, 35K65
\end{AMS}

\pagestyle{myheadings}
\thispagestyle{plain}
\markboth{Erich L Foster, J\'er\^ome Loh\'eac, and Minh-Binh Tran}{A Structure Preserving Scheme for the Kolmogorov Equation}

\section{Introduction} \label{sec:Intro}
In this paper, we are interested in the following Kolmogorov-Fokker-Planck equation
\begin{equation}\label{eq:Kolmogorov} \partial_t
  f-\partial_v^2f-v\partial_xf  = 0.
\end{equation}
As can see from its form, the solution of the equation does not only diffuse in
the direction of $v$, by the effect of the diffusion operator $\partial_{v}^2$,
but it is also diffuses in the direction of $x$, due to the transport equation
$\partial_t f-v\,\partial_xf$.  The hypoellipticity and the asymptotic behavior
of this operator is well known, see for instance the original work of L.
H\"ormander \cite{Hormander:AM:1967} and the work of C. Villani
\cite{Villani:ICM:2006} and F. Rossi \cite{MR3057182}.  The solution to the
Kolmogorov equation is known to decay polynomially in time, and it is our goal
to preserve this decay rate through numerical schemes.

To our knowledge, only a few papers investigate this large time asymptotic of
numerical solutions  of Fokker-Planck type equations. One of most popular
schemes for Fokker-Planck  equations  of the type
\begin{equation}\label{ChangCooper}
\partial_t u(x)=\frac{1}{M(x)}\mbox{div}(N(x)\nabla u(x)+P(x)u(x)),
\end{equation}
is the Chang and Cooper method \cite{ChangCooper:PDS:1970}, which is a finite
difference scheme in both space and time directions. The Chang Cooper method has
been developed later in \cite{Buet} and \cite{Plato}. In
\cite{Chatard:FVS:2012}, \cite{ChatardFilbet:FVS:2012} and
\cite{HillairetFilbet:ABF:2007}, the authors studied nonlinear Fokker-Planck
equations, where the nonlinearity enters into the diffusion. Systems of
Fokker-Planck type equations have also been studied in
\cite{AnnegretAlexander:UGB:2014} and \cite{Annegret:UED:2011} by a Voronoi
finite volume discretization.  However, most of the equations studied before
posses full parabolic strutures, and thus the existing approaches could not be
applied to the Kolmogorov equation,  which has a different structure: there
is an advection but no diffusion term in the $x$ variable.

It is our aim to introduce a new method based on a self-similar technique to
obtain a satisfying long time behavior for numerical solutions of equations with
hypocoercive structures. Since the Kolmogorov equation is highly convective, one
might expect stability issues, thus a natural approach to solve the Kolmogorov
equation numerically is to use an operator splitting technique, where the
Kolmogorov equation is splitted into two equations: a transport and a heat
equation.  Like the heat equation the Kolmogorov equation contains a diffusion
operator and so some behaviors from the heat equation can also be observed in
the Kolmogorov equation. In order to solve the heat equation by some discretization method, one needs to
restrict the domain $\mathbb{R}^2$ to a bounded domain and impose boundaries
condition.  However, it is known that the support of the solution to the heat
equation spreads to the whole space as time evolves, therefore restricting the
computational domain to a bounded domain is not an ideal strategy, if we want to
observe the long time behavior of the solution \eqref{eq:Kolmogorov}.

Another
approach is to perform the change of variables
\begin{equation}\label{eq:Rotatinga}
  f(t,v,x)=g(t,v,x+t\,v)=g(t,v,z),
\end{equation}
then \eqref{eq:Kolmogorov} becomes
\begin{equation}\label{eq:Rotating}
  \partial_tg  = \partial_v^2 g + 2t\,\partial_{vz} g +t^2 \partial_z^2 g.
\end{equation}
Since the frame of reference follows the transport we no longer have an issue
with stability and thus one can apply a method such as the finite element method
(FEM) without much problem.  However, the problem, again, is we need to restrict
the computational domain to a bounded domain, while the support of the solution
of the equation spreads to the whole space as time evolves. In
\autoref{sse:BoundedDomain}, we prove that for a truncated domain, the
solutions of \eqref{eq:Kolmogorov} and \eqref{eq:Rotating} set in a bonded domain with homogeneous Dirichlet boundary conditions converge
exponentially to $0$, while for the original Cauchy problem the
convergence is expected to be polynomial.

Inspired by the self-similarity technique in control theory  (see, for instance,
\cite{EscobedoZuazua:IJM:1991,EscobedoZuazua:SIMA:1997}), we propose, in
\autoref{sec:Scheme}, a new strategy to design structure preserving schemes for
the Kolmogorov equation by using the technique of self-similarity change of
variables. Thus, we see simulations are more computationally efficient and less
reliant on artificial boundary conditions. By the self-similarity technique, we
transform Kolmogorov equation into the following equation:
\begin{equation} \label{eq:SelfSimilar}
  \partial_s \tilde{g} = 2(1-e^{-s}) \partial_{\tilde{v}\tilde{z}} \tilde{g}
  +\partial_{\tilde{v}}^2 \tilde{g}
    + (1-e^{-s})^2 \partial_{\tilde{z}\tilde{z}} \tilde{g}
    + \frac{1}{2} \tilde{v} \partial_{\tilde{v}} \tilde{g}
    + \frac{3}{2} \tilde{z} \partial_{\tilde{z}} \tilde{g}+2\tilde{g}.
\end{equation}
Therefore, problem of designing schemes  preserving asymptotic behavior for the
Kolmogorov equation becomes a problem of designing a standard scheme for the
determining the solution, $\tilde{g}$, to \eqref{eq:SelfSimilar}.
Then on this new problem the numerical study of the convergence rate will be more easy, since the convergence rate in the original variables will be preserved if the self-similar solution is converge to a steady state.
Consequently, numerically speaking we have to check weather the self-similar solution converges to a steady state.
This fact appears to be true for a classical finite element method as on can chek on figures~\ref{fig:SplitSimilarT2_40} and~\ref{fig:SimilarNorm}.

We prove in Proposition \ref{prop:normtg} and Theorem
\ref{thm:LongTimeBehaviorH} that the behavior of the $L^\infty$ norm of
$\tilde{g}$ convergence to a steady state as time tends to infinity.  We then
introduce \autoref{alg:SimilaritySplitting} as a way to discretize
\eqref{eq:SelfSimilar} by an operator splitting technique combined with a finite
element method where the domain is truncated into a bounded one.
We prove the convergence of the truncated method and the analysis of the operator splitting technique in Proposition~\ref{Propo:ConvergenceTruncated}, Theorem~\ref{thm:Coercivity} and Corollary~\ref{prop:ExactSplitting}.
In \eqref{condition}, we represent a condition that the truncated domain should satisfy in order to guarantee the convergence to the steady state of the self-similar solution.
We also provide numerical simulations to illustrate
our self-similarity technique. Numerically, the self-similarity technique has a
major benefit, in that, for long time simulation one need not choose a large
domain, since the solution maintains compact support for a well chosen initial
domain.  Additionally, the time scaling allows for fast time marching and so
simulations are more computationally efficient and less reliant on artificial
boundary conditions. Thus, the main benefits of using a self-similar change of
variables include: small space domain, and fast marching in time. In
\autoref{sec:Results} we demonstrate these properties by comparing the numerical
simulations of the Kolmogorov equation in its original form, in its Lagrangian
variable form, and in its self-similarity form. We demonstrate that not only is
the self-similarity change of variable more computationally efficient in the
sense of computational resources used, but also in the size of the error
resulting from the numerical scheme used.

We would also like to mention a similar technique: In
\cite{Filbet2004,Filbet2013}, F.~Filbet et al. introduce a new technique
based on the idea of rescaling the kinetic equation according to  hydrodynamic
quantities. The rescaling in velocity, is as follows
\begin{equation*}
  f(t,x,v)=\frac{1}{\omega(t,x)^{d_v}}g\left(t,x,\frac{v}{\omega(t,x)}\right),
\end{equation*}
where the function $\omega$ is an accurate measure of the support of the
function $f$. The rescaling can be defined based on the information provided by
the hydrodynamic fields, computed from a macroscopic model corresponding to the
original kinetic equation.  However, the collisional kinetic equations
considered by F.~Filbet et al.  are much more sophisticated than the
Kolmogorov equation, since they are nonlinear. Therefore, the scaling in space
$\omega(t,x)$ has to be computed through hydrodynamic quantities, while in our
case, the space scale is quite simple to compute.

Our idea is quite similar to theirs, but goes further; we not only rescale the
velocity variable, but also the time variable. Indeed, rescaling in time results
in convergence to a steady state instead of either exponential or even
polynomial decay. This has the benefit of maintaining compact support, rather
than an expansion of the solution to the whole space. Numerically, one can see
with the time rescaling, as time evolves, the support of the self-similarity
solution is trapped in a bounded domain if the initial condition is compactly
supported.

\begin{remark}
    Rescaling\slash self-similar algorithms are a well-known strategy
    in constructing numerical schemes, which can capture the profile of blow up
    solutions. Such algorithms were first introduced in \cite{MR948774} and have
    continued to be developed in \cite{MR1651767,MR2512771,MR2422113,MR2285888}.
\end{remark}

The structure of the paper is as follows: In \autoref{sec:Kernel}, we recall
some classical results on the kernel and asymptotic behavior of the solution of
the Kolmogorov equation. In \autoref{sec:Scheme}, we introduce the self
similar formulation of the Kolmogorov equation. We also provide a theoretical
study on the solution of the self similar equation: in
Proposition~\ref{prop:normtg} the solution is proven to be
bounded and converges to a steady state in
Theorem~\ref{thm:LongTimeBehaviorH}. In order to solve the Kolmogorov equation
numerically, one needs to truncate the full space to a bounded domain, thus in
\autoref{sec:Methods} we introduce the methods for simulating the Kolmogorov
equation in a bounded domain, including operator splitting methods for both the
original form of the Kolmogorov equation and the Self-Similar form of the
Kolmogorov equation. In \autoref{sse:BoundedDomain}, we discuss
truncating the domain for the Kolmogorov equation in three forms: the original
form \eqref{eq:Kolmogorov}, the Lagrangian form \eqref{eq:Rotating} and the
self-similar form \eqref{eq:SelfSimilar}.  We prove that once the domain is
truncated the solution to the the original form \eqref{eq:Kolmogorov} and the
Lagrangian form \eqref{eq:Rotating} converge exponentially to $0$, which does not
correspond to the polynomial convergence predicted for the original Cauchy
problem. For the self-similar case \eqref{eq:SelfSimilar}, we will see numerically that the solution
converges to a steady state. This coincides with the property of the original
Cauchy problem.
As a consequence, we choose to truncate and discretize the
self-similar equation \eqref{eq:SelfSimilar} by an operator splitting technique
combined with a finite element method. The convergence of the truncated method and the analysis of the operator splitting technique is given in Proposition~\ref{Propo:ConvergenceTruncated}, Theorem~\ref{thm:Coercivity} and Corollary~\ref{prop:ExactSplitting}.
We also give a necessary condition, \eqref{condition}, wich should be satisfied for the troncated domain in order to obtain a time asymptotic convergence to a steady state.
Furthermore, numerical
results verifying the theory are presented in \autoref{sec:Results}.

\section{Kernel and Long Time Behavior} \label{sec:Kernel}
In this section we recall the kernel for the Kolmogorov equation on the whole
space and describe the long term behavior. This will be useful in determining
the correctness of the methods developed in later sections and for developing
the self-similarity change of variables.

\subsection{Kernel} \label{sse:Kernel}

In what follows, for the sake of completeness, we recall the kernel for the
Kolmogorov equation.  The kernel is obtained by a standard method using the
Fourier transform.  These results were originally obtained by A.~Kolmogorov
\cite{MR1503147} in the sixties and a more general statement was developed by
O.~Calin, D.-C.~Chang and H.~Haitao \cite{MR2570435} or in K.~Beauchard and
E.~Zuazua~\cite{BeauchardZuazua:AIH:2009}.
\begin{proposition}\label{prop:KernelForm}
  The kernel of \eqref{eq:Rotating} is:
  \begin{equation}
    G_t(v,z) = \frac{\sqrt{3}}{2\pi t^2} e^{-\frac{1}{4\, t^3}(3 z^2+(2 t\, v - 3 z)^2)}\qquad (t>0,\, x\in\R,\, v\in\R),
    \label{eq:Kernel}
  \end{equation}
  Given an initial data, $f_0 \in C^{\infty}(\mathbb{R}^2)$, the solution $f(t,v,x)$ to \eqref{eq:Kolmogorov} is given by the convolution
  \begin{equation}
    f(t,v,x) = (f_0 * G_t)(v,x+v\, t)i\qquad (t>0,\, x\in\R,\, v\in\R).
    \label{eq:Solution}
  \end{equation}
\end{proposition}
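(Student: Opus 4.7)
The plan is to derive the kernel by Fourier transforming the rotating equation \eqref{eq:Rotating} in the spatial variables $(v,z)$, solving the resulting ODE in $t$, and then inverting the Fourier transform on the resulting Gaussian profile. Denote by $\hat{g}(t,\xi,\eta)$ the Fourier transform of $g$ with dual variables $\xi,\eta$ to $v,z$. Under this transform the operators $\partial_v^2$, $\partial_{vz}$ and $\partial_z^2$ become multiplication by $-\xi^2$, $-\xi\eta$, $-\eta^2$ respectively, and \eqref{eq:Rotating} collapses to the parameter-dependent ODE
\begin{equation*}
  \partial_t\hat{g}(t,\xi,\eta) = -\bigl(\xi^2+2t\,\xi\eta+t^2\eta^2\bigr)\hat{g}(t,\xi,\eta) = -(\xi+t\eta)^2\,\hat{g}(t,\xi,\eta),
\end{equation*}
which I can integrate explicitly.

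Integrating from $0$ to $t$ gives
\begin{equation*}
  \hat{g}(t,\xi,\eta) = \hat{g}_0(\xi,\eta)\exp\!\left(-t\xi^2-t^2\xi\eta-\tfrac{t^3}{3}\eta^2\right),
\end{equation*}
so the kernel of \eqref{eq:Rotating} is the inverse Fourier transform of the Gaussian symbol $\widehat{G_t}(\xi,\eta)=\exp(-\tfrac{1}{2}(\xi,\eta)A(\xi,\eta)^T)$ with
\[
A = \begin{pmatrix} 2t & t^2 \\ t^2 & \tfrac{2t^3}{3}\end{pmatrix},\qquad \det A = \tfrac{t^4}{3}.
\]
Using the standard inverse Fourier transform formula for a Gaussian in dimension two, $G_t(v,z)=\frac{1}{2\pi\sqrt{\det A}}\exp(-\tfrac{1}{2}(v,z)A^{-1}(v,z)^T)$, I compute $A^{-1}$ and reorganise the quadratic form; the prefactor becomes $\sqrt{3}/(2\pi t^2)$, while completing the square in $v,z$ rewrites the exponent as $-\tfrac{1}{4t^3}(3z^2+(2tv-3z)^2)$, recovering exactly \eqref{eq:Kernel}.

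Finally, the Fourier identity $\hat{g}(t,\xi,\eta) = \hat{g}_0(\xi,\eta)\widehat{G_t}(\xi,\eta)$ translates into $g(t,v,z)=(g_0\ast G_t)(v,z)$ in physical variables, and undoing the change of variables $z = x+tv$ (with $g_0=f_0$, since at $t=0$ the transformation is the identity) yields the claimed representation \eqref{eq:Solution}. The only real bookkeeping obstacle is the identification of the quadratic form $(v,z)A^{-1}(v,z)^T$ with the form $\frac{1}{2t^3}(3z^2+(2tv-3z)^2)$ and the careful tracking of the $\sqrt{3}/(2\pi t^2)$ prefactor through the Gaussian inversion; once this algebraic match is made, the remainder is immediate.
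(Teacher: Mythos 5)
Your proposal is correct, and it is precisely the "standard method using the Fourier transform" that the paper invokes: the paper itself gives no written proof of Proposition~\ref{prop:KernelForm}, only a citation to Kolmogorov and later references, so your derivation fills in exactly the argument the authors allude to. The key identities all check out: integrating $\partial_t\hat g=-(\xi+t\eta)^2\hat g$ gives the symbol $\exp(-t\xi^2-t^2\xi\eta-\tfrac{t^3}{3}\eta^2)$, whose covariance matrix has determinant $t^4/3$ and inverse $\bigl(\begin{smallmatrix}2/t & -3/t^2\\ -3/t^2 & 6/t^3\end{smallmatrix}\bigr)$, reproducing both the prefactor $\sqrt{3}/(2\pi t^2)$ and the exponent $\tfrac{1}{4t^3}\bigl(3z^2+(2tv-3z)^2\bigr)$.
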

With the kernel for the Kolmogorov equation in hand we will be able to better
understand the behavior of the Kolmogorov system, since the kernel allows one to
compute the exact solution through a convolution.  Additionally, knowing the
behavior of the system will allow us to evaluate the validity of any numerical
method applied to the Kolmogorov equation.
\subsection{Long Time Behavior of the Kolmogorov Equation}
\label{sse:LongTime}

%

In what follows, we will give some more precise decay rates, using
the explicit form to the solution of \eqref{eq:Kolmogorov} given in
Proposition~\ref{prop:KernelForm}.  Substituting $z = x + v\,t$ into \eqref{eq:Kernel}
we see the kernel in $(v,x)$ variables is
\begin{equation*}
  G_t(v, x + v\,t) = \frac{\sqrt{3}}{2\pi t^2}
      e^{-\frac{1}{4\,t^3}(3\, (x + t\, v)^2 + (3x + t\, v)^2)}.
\end{equation*}
Thus, the kernel represents a series of ellipses given by the Gaussian
\begin{equation*}
  e^{-\frac{1}{4\, t^3}(3\, (x + t\, v)^2 + (3 x + t\, v)^2)},
\end{equation*}
where the spread of the ellipse is described by the standard deviation
$\frac{2t^{\frac{3}{2}}}{\sqrt{3}}$ in the direction $x + t\, v$ while the spread
in the direction $3\,x + t\, v$ is described by the standard deviation
$2\,t^{\frac{3}{2}}$. Thus, we see not only that the shape changes over time,
but the width of the solution changes over time, and it changes to a varying
degree in different directions.

Now we want to know what the behavior of \eqref{eq:Kolmogorov} is as $t\to
\infty$.
It remains clear that $\lim_{t\to \infty} G_t(v,x)=0$.
But let us give a more precise asymptotic on $G$.
%
\begin{lemma}\label{lem:GreenDecay}
  For every $q\in[1,\infty]$ and every $t>0$, we have $G_t\in L^q(\mathbb{R}^2)$ and
	\begin{equation*}
		\|G_t\|_{L^q(\mathbb{R}^2)}=\begin{cases}
			q^{\frac{-1}{q}}\left( \dfrac{\sqrt{3}}{2\pi t^2}\right)^{\frac{q-1}{q}}
			& \text{if }q\in[1,\infty),\\[1em]
			\dfrac{\sqrt{3}}{2\pi t^2} & \text{if }q=\infty.
	\end{cases}\qquad (t>0).
\end{equation*}
\end{lemma}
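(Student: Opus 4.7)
The plan is to split into the two cases $q=\infty$ and $q\in[1,\infty)$, reducing the latter to a standard two–dimensional Gaussian integral. For $q=\infty$, I observe that the quadratic form $Q(v,z):=3z^2+(2tv-3z)^2$ appearing in the exponent of \eqref{eq:Kernel} is positive semidefinite and vanishes precisely at $(v,z)=(0,0)$, so $G_t$ attains its pointwise maximum there and $\|G_t\|_{L^\infty}=\frac{\sqrt{3}}{2\pi t^2}$ is immediate. The only real work is the case $q<\infty$.

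For $q\in[1,\infty)$ I would raise to the $q$-th power and factor out the prefactor, obtaining
\begin{equation*}
  \|G_t\|_{L^q}^{q} = \left(\frac{\sqrt{3}}{2\pi t^2}\right)^{q}\int_{\R^2} e^{-\frac{q}{4t^3}Q(v,z)}\,dv\,dz.
\end{equation*}
To evaluate the integral I would diagonalize the quadratic form via the linear change of variables $u=2tv-3z$, $w=\sqrt{3}\,z$, whose Jacobian is $\left|\det\left(\begin{smallmatrix}2t & -3\\ 0 & \sqrt{3}\end{smallmatrix}\right)\right|=2\sqrt{3}\,t$, so that $dv\,dz=\frac{1}{2\sqrt{3}\,t}\,du\,dw$ and $Q(v,z)=u^2+w^2$. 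Equivalently one can read off $\det Q = 12t^2$ from the matrix $\left(\begin{smallmatrix}4t^2 & -6t\\ -6t & 12\end{smallmatrix}\right)$ and apply the standard Gaussian integral formula; both routes lead to the same value.

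With the change of variables in hand, the integral reduces to a product of one–dimensional Gaussians:
\begin{equation*}
  \int_{\R^2} e^{-\frac{q}{4t^3}(u^2+w^2)}\,\frac{du\,dw}{2\sqrt{3}\,t}
  = \frac{1}{2\sqrt{3}\,t}\cdot\frac{4\pi t^3}{q}
  = \frac{2\pi t^2}{q\sqrt{3}}.
\end{equation*}
Substituting back yields
\begin{equation*}
  \|G_t\|_{L^q}^{q}=\frac{1}{q}\left(\frac{\sqrt{3}}{2\pi t^2}\right)^{q-1},
\end{equation*}
and taking $q$-th roots gives the stated formula. As a sanity check, $q=1$ returns $\|G_t\|_{L^1}=1$, consistent with $G_t$ being a probability kernel, and letting $q\to\infty$ recovers the $L^\infty$ value.

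There is no conceptual obstacle here; the statement is essentially a bookkeeping exercise. The only place to be careful is the determinant of the quadratic form (equivalently, the Jacobian of the diagonalizing substitution), where a factor of $t$ must be tracked correctly, since it is precisely what produces the $t^{-2(q-1)/q}$ decay rate visible in the final expression.
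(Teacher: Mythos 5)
Your proof is correct and follows essentially the same route as the paper: raise to the $q$-th power, diagonalize the quadratic form via the substitution $u=2tv-3z$, $w=\sqrt{3}\,z$ with Jacobian $2\sqrt{3}\,t$, and evaluate the resulting radial Gaussian, yielding $\|G_t\|_{L^q}^q=\frac{1}{q}\bigl(\frac{\sqrt{3}}{2\pi t^2}\bigr)^{q-1}$; the $L^\infty$ case is handled by noting the exponent is maximized at the origin, just as the paper dismisses it as obvious. All constants and the factor of $t$ check out, so there is nothing to add.
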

\begin{proof}
  Let $G_t$ be the function defined in Proposition~\ref{prop:KernelForm}.
  Then for every $q>0$, we have:
  \begin{align*}
    \|G_t\|_{L^q(\mathbb{R}^2)}^q
        & = \left(\frac{\sqrt{3}}{2\pi t^2}\right)^q
            \int_{\mathbb{R}^2}\! \exp\left( \frac{-q}{4\, t^3}
            \left(3z^2 + \left(2t, v - 3z\right)^2\right)\right)\, dv\, dz\\
        & =\frac{1}{2\sqrt{3}t} \left(\frac{\sqrt{3}}{2\pi t^2}\right)^q
            \int_{\mathbb{R}^2}\! \exp\left( \frac{-q}{4\, t^3}
            \left(w^2 + u^2\right)\right)\, dw\, dz\\
        & =\frac{1}{2\sqrt{3}t} \left(\frac{\sqrt{3}}{2\pi t^2}\right)^q
            \frac{4\pi t^3}{q} = \frac{3^{\frac{q-1}{2}}}{q(2\pi)^{q-1}}
            t^{-2(q-1)}.
  \end{align*}
  and the result in the $L^\infty$ is obvious.
\end{proof}
\begin{corollary}\label{cor:DecayRate}
  Let $p,q,r\in[1,\infty]$ such that $\frac{1}{p}+\frac{1}{q}=1+\frac{1}{r}$.\\
  If $f_0\in L^p(\mathbb{R}^2)$, then the solution $f$ of \eqref{eq:Kolmogorov}
  satisfies for every $t>0$, $f(t)\in L^r(\mathbb{R}^2)$ , and
  \begin{equation}\label{eq:expDecayYoung}
    \|f(t)\|_{L^r(\mathbb{R}^2)}\leqslant
    \begin{cases}
        \displaystyle{\frac{C(q)}{t^{2(1-\frac{1}{q})}}
            \|f_0\|_{L^p(\mathbb{R}^2)}} & \text{if } q \in [1,\infty),\\[1em]
        \displaystyle{\frac{C(q)}{t^2}\|f_0\|_{L^p(\mathbb{R}^2)}}
            & \text{if } q = \infty,
    \end{cases}
  \end{equation}
	with $ C(q) = q^{\frac{-1}{q}}\left( \frac{\sqrt{3}}{2\pi}\right)^{\frac{q-1}{q}}$ if $q\in[1,\infty)$
  and $C(q)=\frac{\sqrt{3}}{2\pi}$ if $q=\infty$.
\end{corollary}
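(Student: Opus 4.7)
The plan is to combine the explicit solution formula from Proposition~\ref{prop:KernelForm} with Young's convolution inequality and the norm estimates just proved in Lemma~\ref{lem:GreenDecay}. The exponent relation $\frac{1}{p}+\frac{1}{q}=1+\frac{1}{r}$ is precisely the hypothesis of Young's inequality on $\mathbb{R}^2$, so the structure of the result strongly suggests that approach.

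First, I would recall that, by Proposition~\ref{prop:KernelForm}, $f(t,v,x) = (f_0 * G_t)(v, x+vt)$. The key preliminary observation is that the map $(v,x)\mapsto (v,x+vt)$ is a shear transformation on $\mathbb{R}^2$ whose Jacobian determinant equals $1$; therefore it preserves every $L^r$ norm. This reduces the problem to bounding $\|f_0 * G_t\|_{L^r(\mathbb{R}^2)}$.

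Next, I would invoke Young's convolution inequality on $\mathbb{R}^2$: under the assumption $\frac{1}{p}+\frac{1}{q}=1+\frac{1}{r}$, we have
\begin{equation*}
    \|f_0 * G_t\|_{L^r(\mathbb{R}^2)} \leqslant \|f_0\|_{L^p(\mathbb{R}^2)}\, \|G_t\|_{L^q(\mathbb{R}^2)}.
\end{equation*}
Plugging in the explicit expression for $\|G_t\|_{L^q(\mathbb{R}^2)}$ supplied by Lemma~\ref{lem:GreenDecay} immediately yields the claimed decay rates: the factor $t^{-2(q-1)/q} = t^{-2(1-1/q)}$ in the finite-$q$ case comes directly from $\bigl(t^{-2}\bigr)^{(q-1)/q}$, while in the case $q=\infty$ the kernel is bounded by $\frac{\sqrt{3}}{2\pi t^2}$, producing the $t^{-2}$ rate. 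The constant $C(q)$ is then read off verbatim from Lemma~\ref{lem:GreenDecay}.

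There is no real obstacle here: the proof is essentially a one-line application of Young's inequality, with the only non-routine point being the remark that the translation $(v,x)\mapsto(v,x+vt)$ is $L^r$-isometric so that the norm of $f(t,\cdot,\cdot)$ coincides with that of $f_0 * G_t$. For completeness, one might check the edge cases $p=1$ or $r=\infty$, but these are handled uniformly by Young's inequality in its standard form.
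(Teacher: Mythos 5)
Your proposal is correct and follows exactly the route the paper takes: its proof of Corollary~\ref{cor:DecayRate} is the one-line remark that the result ``follows directly from Young's inequality and Lemma~\ref{lem:GreenDecay}.'' You simply make explicit the two details the paper leaves implicit, namely that the shear $(v,x)\mapsto(v,x+vt)$ is measure-preserving (so the $L^r$ norm of $f(t)$ equals that of $f_0*G_t$) and that the constant and decay rate are read off from the $L^q$ norm of the kernel.
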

\begin{proof}
    The proof of Corollary \ref{cor:DecayRate} follows directly from Young's
    inequality and Lemma \ref{lem:GreenDecay}.
\end{proof}

\section{Self-similar formulation of the Kolmogorov operator} \label{sec:Scheme}

In this section, we will present how we derive the self-similar form, \eqref{eq:SelfSimilar}, for the Kolmogoref equation,~\eqref{eq:Kolmogorov}.
In addition, we will prove the convergence of $\tilde{g}$, solution of \eqref{eq:SelfSimilar}, to a steady state as the time tends to infty.
More other, we will also give quite surprizing remark establishing that the behavior of the norm of $\tilde{g}$ is not motonous.

Let us introduce the function $g$ as in \eqref{eq:Rotatinga} to obtain \eqref{eq:Rotating}.
Now define the change of variables
\begin{equation*}
  \tilde{g}(s,\tilde{v},\tilde{z})=e^{2s}g(e^s-1,e^{s/2}\tilde{v},e^{3s/2}\tilde{z})\qquad ( (s,\tilde{v},\tilde{z})\in\mathbb{R}_+\times\mathbb{R}\times\mathbb{R}),
\end{equation*}
then
\begin{align*}
  \partial_s \tilde{g}(s,\tilde{v},\tilde{z}) &= e^{2s}\big[
        e^s \partial_t g(e^s-1,e^{s/2}\tilde{v},e^{3s/2}\tilde{z})
        + 2 g(e^s - 1, e^{s/2} \tilde{v}, e^{3/2 s})\\
    &\quad + \frac{1}{2} e^{s/2} \tilde{v}\, \partial_v g(e^s-1,e^{s/2}\tilde{v},e^{3s/2}\tilde{z})
        + \frac{3}{2} e^{3s/2} \tilde{z}\, \partial_z g(e^s-1,e^{s/2}\tilde{v},e^{3s/2}\tilde{z})\big],\\
        \partial_{\tilde{v}} \tilde{g}(s,\tilde{v},\tilde{z}) &=
        e^{5s/2}\partial_v g(e^s-1,e^{s/2}\tilde{v},e^{3s/2}\tilde{z}),~~~~~~~
        \partial_{\tilde{z}} \tilde{g}(s,\tilde{v},\tilde{z}) =
        e^{7s/2}\partial_zg(e^s-1,e^{s/2}\tilde{v},e^{3s/2}\tilde{z}),\\
        \partial_{\tilde{v}\tilde{z}} \tilde{g}(s,\tilde{v},\tilde{z}) &=
        e^{4s}\partial_{vz} g(e^s-1,e^{s/2}\tilde{v},e^{3s/2}\tilde{z}),~~~~~~~~
        \partial_{\tilde{v}}^2 \tilde{g}(s,\tilde{v},\tilde{z}) =
        e^{3s}\partial_{vv} g(e^s-1,e^{s/2}\tilde{v},e^{3s/2}\tilde{z}),\\
        \partial_{\tilde{z}}^2 \tilde{g}(s,\tilde{v},\tilde{z}) &=
        e^{5s}\partial_{zz} g(e^s-1,e^{s/2}\tilde{v},e^{3s/2}\tilde{z}).
\end{align*}
After substituting the above into \eqref{eq:Rotating} and rearranging we get the
following version of the Kolmogorov equation in self-similar variables
\begin{subequations}\label{eq:Similarity}
  \begin{align}
    & \partial_s \tilde{g} = 2(1-e^{-s}) \partial_{\tilde{v}\tilde{z}} \tilde{g} + \partial_{\tilde{v}}^2 \tilde{g}
    + (1-e^{-s})^2 \partial_{\tilde{z}\tilde{z}} \tilde{g} + \frac{1}{2} \tilde{v} \partial_{\tilde{v}} \tilde{g}
        + \frac{3}{2} \tilde{z} \partial_{\tilde{z}} \tilde{g}+2\tilde{g},\\
        & \tilde{g}(0,\cdot,\cdot)= f_0.
  \end{align}
\end{subequations}
\medskip

Let us now give some qualitative behavior on $\tilde{g}$.
More precisely, we will discuss the behavior of the norm of $\tg$.
It is easy to see that:
\begin{equation*}
    \frac{1}{2}\frac{\partial \|\tg(s)\|_{L^2(\R^2)}^2}{\partial s}
        = -\left\|(1-e^{-s})\partial_{\tilde{z}}\tg(s)
            + \partial_{\tilde{v}} \tg(s)\right\|_{L^2(\R^2)}^2
            + \left\|\tg(s)\right\|_{L^2(\R^2)}^2 \qquad (s>0).
\end{equation*}
So we can see the evolution of the norm of $\tg$ is the result of the
competition between the norm of $\tg$ and the norm of the ''divergence'' of
$\tg$.  But since we are working on an unbounded domain, we cannot use a
Poincar\'e inequality to give a precise result on the behavior of the norm of
$\tg$.\\ However, using the integral representation of $g$, we can give a
more precise statement for the behavior of the $L^\infty$ norm of $g$.  This is
the aim of the following proposition.
\begin{proposition}\label{prop:normtg}
  Let us assume that $f_0\in L^1(\R^2)\cap L^\infty(\R^2)$, then the solustion $\tg$ of \eqref{eq:Similarity} satisfies:
  $$\|\tg(s,\cdot,\cdot)\|_{L^\infty(\R^2)}\leqslant \min\left\{ \frac{\sqrt{3}}{2\pi}\frac{1}{(1-e^{-s})^2}\left\|f_0\right\|_{L^1(\R^2)},\, e^{2s}\left\|f_0\right\|_{L^\infty(\R^2)} \right\}\qquad (s>0).$$
\end{proposition}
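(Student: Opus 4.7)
The plan is to translate the claimed bound on $\tilde g$ back into a bound on the original solution $f$ (equivalently on $g$) via the change of variables, and then read off the two inequalities from the explicit kernel representation of Proposition~\ref{prop:KernelForm} combined with Lemma~\ref{lem:GreenDecay} (or directly from Corollary~\ref{cor:DecayRate}).

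First I would unwind the definitions. Starting from $f(t,v,x)=g(t,v,x+tv)$ together with the convolution formula $f(t,v,x) = (f_0 * G_t)(v,x+tv)$, I obtain $g(t,v,z)=(f_0*G_t)(v,z)$, so that taking the $L^\infty$ norm commutes with the self-similar rescaling and gives
\begin{equation*}
\|\tilde g(s,\cdot,\cdot)\|_{L^\infty(\R^2)}
= e^{2s}\,\|g(e^s-1,\cdot,\cdot)\|_{L^\infty(\R^2)}
= e^{2s}\,\|f_0*G_{e^s-1}\|_{L^\infty(\R^2)}.
\end{equation*}

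Next I would apply Young's convolution inequality in two different ways. With $(p,q,r)=(1,\infty,\infty)$, Lemma~\ref{lem:GreenDecay} (the $q=\infty$ case) yields $\|f_0*G_t\|_{L^\infty}\leqslant \|f_0\|_{L^1}\|G_t\|_{L^\infty} = \frac{\sqrt 3}{2\pi t^2}\|f_0\|_{L^1}$. Setting $t=e^s-1$ and multiplying by $e^{2s}$ produces the factor
\begin{equation*}
\frac{e^{2s}}{(e^s-1)^2} = \frac{1}{(1-e^{-s})^2},
\end{equation*}
which gives the first term in the minimum. With $(p,q,r)=(\infty,1,\infty)$, Lemma~\ref{lem:GreenDecay} applied at $q=1$ gives $\|G_t\|_{L^1}=1$, hence $\|f_0*G_t\|_{L^\infty}\leqslant \|f_0\|_{L^\infty}$, and after multiplying by $e^{2s}$ this yields the second term $e^{2s}\|f_0\|_{L^\infty}$. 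Taking the minimum of the two upper bounds finishes the argument.

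There is no real obstacle here: the statement is essentially a corollary of Proposition~\ref{prop:KernelForm} and Lemma~\ref{lem:GreenDecay} once the chain of changes of variables $(t,v,x)\mapsto(t,v,z)\mapsto(s,\tilde v,\tilde z)$ has been tracked carefully. The only point that requires care is bookkeeping the $e^{2s}$ prefactor in the definition of $\tilde g$ and checking that the two rescalings $v\mapsto e^{s/2}\tilde v$, $z\mapsto e^{3s/2}\tilde z$ preserve the $L^\infty$ norm (they are simple pointwise substitutions, so they do). The two bounds are complementary: the $L^1$ bound is useful for large $s$ (where $(1-e^{-s})^{-2}\to 1$), while the $L^\infty$ bound is the sharp one as $s\to 0^+$, and this is why the minimum appears naturally in the statement.
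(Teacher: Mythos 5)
Your proof is correct and follows essentially the same route as the paper's: both express $\tilde g$ through the kernel representation $g(t,\cdot,\cdot)=f_0*G_t$, apply Young's convolution inequality with the two exponent pairings $(1,\infty)$ and $(\infty,1)$, and invoke Lemma~\ref{lem:GreenDecay} to evaluate $\|G_t\|_{L^\infty}$ and $\|G_t\|_{L^1}$. The only (cosmetic) difference is that you pull the $L^\infty$ norm through the rescaling before applying Young, whereas the paper changes variables inside the convolution integral first and applies Young to the rescaled factors; the resulting bounds are identical.
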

\begin{remark}
  It is easy to see that we also have:
  $$\|\tg(s,\cdot,\cdot)\|_{L^\infty(\R^2)}\leqslant C(s)\max\left\{ \left\|f_0\right\|_{L^1(\R^2)},\, \left\|f_0\right\|_{L^\infty(\R^2)} \right\}\qquad (s>0).$$
	with $\displaystyle C(s)=\min\left\{ \frac{\sqrt{3}}{2\pi}\frac{1}{(1-e^{-s})^2},\, e^{2s}\right\}$.
	The behavior of $C$ is ploted on \autoref{fig:normtg}.
  \begin{figure}[ht!]
    \begin{center}
      \includegraphics[scale=0.5]{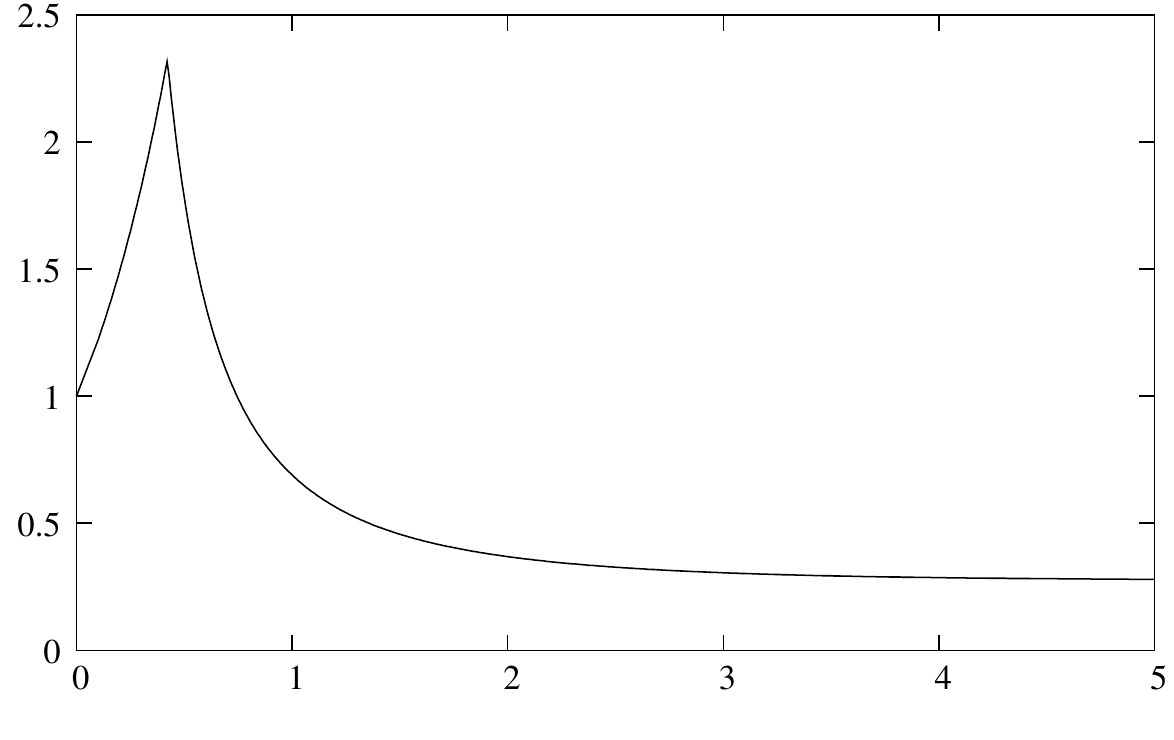}
    \end{center}
    \caption{\label{fig:normtg}
		  Graphical representation of $s\mapsto \min\left\{ \frac{\sqrt{3}}{2\pi}\frac{1}{(1-e^{-s})^2},\, e^{2s} \right\}$.
		}
  \end{figure}
\end{remark}

\begin{proof}
  Recall:
  $$\tilde{g}(s,\tilde{v},\tilde{z})= e^{2s}g(e^s - 1, e^{s/2} \tilde{v}, e^{3s/2} \tilde{z})\qquad ( (s,\tv,\tz)\in\R\times\R\times\R)$$
  and hence,
  \begin{equation*}
    \begin{split}
    \tilde{g}(s,\tilde{v},\tilde{z})
        & = e^{2s} \int_{\R^2}\! G_{e^s-1}(\nu, \zeta)\, f_0(e^{s/2} \tilde{v}
            - \nu, e^{3/2s}\tilde{z} - \zeta)\, d\nu\, d\zeta\\
        & = e^{4s} \int_{\R^2}\! G_{e^s-1}(e^{s/2} \tilde{\nu}, e^{3s/2}
            \tilde{\zeta})\, f_0(e^{s/2}(\tilde{v}
            - \tilde{\nu}), e^{3/2s} (\tilde{z} - \tilde{\zeta}))\,
            d\tilde{\nu}\, d\tilde{\zeta}\\
    \end{split}
    \quad ( (s,\tv,\tz)\in\R\times\R\times\R).
    \end{equation*}
  Hence, using Young's inequality, for every $s>0$, we have
  $$\|\tg(s)\|_{L^\infty(\R^2)}\leqslant e^{2s}\left\|G_{e^s-1}(e^{s/2}\cdot,e^{3s/2}\cdot)\right\|_{L^\infty(\R^2)}\left\|e^{2s}f_0(e^{s/2}\cdot,e^{3/2s}\cdot)\right\|_{L^1(\R^2)}.$$
  Consequently, using Lemma~\ref{lem:GreenDecay},
  $$\|\tg(s)\|_{L^\infty(\R^2)}\leqslant\frac{\sqrt{3}}{2\pi}\frac{1}{(1-e^{-s})^2}\left\|f_0\right\|_{L^1(\R^2)}.$$
  On the other hand, we can apply Young's inequality in more than one way, for instance, we can also obtain, for every $s>0$,
  follows that:
  $$\|\tg(s)\|_{L^\infty(\R^2)}\leqslant e^{2s}\left\|G_{e^s-1}\right\|_{L^1(\R^2)}\left\|f_0\right\|_{L^\infty(\R^2)}$$
  and Lemma~\ref{lem:GreenDecay}, gives:
  $$\|\tg(s)\|_{L^\infty(\R^2)}\leqslant e^{2s}\left\|f_0\right\|_{L^\infty(\R^2)}.$$
\end{proof}
\medskip

Let us now consider the asymptotic behavior of the norm of $\tg$ ($\tg$ being solution of \eqref{eq:Similarity}) as $s$ goes to infinity.
\begin{theorem}[The long time behavior of $\tilde{g}$]\label{thm:LongTimeBehaviorH}
	Let $p\in[1,\infty]$ and let $f_0:\R\times\R\to\R$ and assume:
	\begin{itemize}
		\item $f_0\in L^1(\R^2)\cap L^p(\R^2)$ if $p\in[1,\infty)$;
			\item $f_0\in L^1(\R^2)\cap L^\infty(\R^2)\cap C^0(\R^2)$ if $p=\infty$.
		\end{itemize}
	Then the solution $\tilde{g}$ of \eqref{eq:Similarity} with initial Cauchy data $f_0$ satisfies:
  \begin{equation*}
    \lim_{s\to\infty}\|\tilde{g}(s,\cdot,\cdot)-M_0G_1\|_{L^p(\R^2)}=0,
  \end{equation*}
  with $\displaystyle M_0=\int_{\R^2}\! f_0(v,x)\, dv\, dx$.
\end{theorem}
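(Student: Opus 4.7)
The plan is to exploit the kernel representation of the solution (available from Proposition~\ref{prop:KernelForm}) to express $\tilde{g}(s,\cdot,\cdot)$ as the convolution of $G_{1-e^{-s}}$ against a rescaling of $f_0$ whose mass concentrates at the origin with total mass $M_0$, so that in the limit one recovers $G_1 * (M_0\,\delta_{(0,0)}) = M_0 G_1$. Starting from the integral formula used in the proof of Proposition~\ref{prop:normtg}, a direct inspection of \eqref{eq:Kernel} yields the self-similarity identity
\begin{equation*}
    G_{e^s-1}(e^{s/2}\alpha,\, e^{3s/2}\beta) = e^{-2s}\,G_{1-e^{-s}}(\alpha,\beta) \qquad (s>0,\,(\alpha,\beta)\in\R^2),
\end{equation*}
which, substituted in the convolution expression for $\tilde{g}$, gives
\begin{equation*}
    \tilde{g}(s,\tilde{v},\tilde{z}) = \bigl(G_{1-e^{-s}} * F_s\bigr)(\tilde{v},\tilde{z}),\qquad F_s(\tilde{\nu},\tilde{\zeta}) := e^{2s} f_0(e^{s/2}\tilde{\nu},\, e^{3s/2}\tilde{\zeta}).
\end{equation*}
An elementary change of variables gives $\|F_s\|_{L^1(\R^2)} = \|f_0\|_{L^1(\R^2)}$ and $\int_{\R^2} F_s = M_0$ for every $s>0$, while for every $\varepsilon>0$ the tail $\int_{\{|(\tilde{\nu},\tilde{\zeta})|>\varepsilon\}} |F_s|$ tends to $0$ as $s\to\infty$ by dominated convergence, so $F_s$ behaves as an $M_0\,\delta_{(0,0)}$-approximation.

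Using $\int F_s = M_0$, I would write the error as
\begin{equation*}
    \tilde{g}(s,\tilde{v},\tilde{z}) - M_0 G_1(\tilde{v},\tilde{z}) = \int_{\R^2}\bigl[G_{1-e^{-s}}(\tilde{v}-\tilde{\nu},\tilde{z}-\tilde{\zeta}) - G_1(\tilde{v},\tilde{z})\bigr]\, F_s(\tilde{\nu},\tilde{\zeta})\, d\tilde{\nu}\, d\tilde{\zeta},
\end{equation*}
and, after adding and subtracting $G_1(\tilde{v}-\tilde{\nu},\tilde{z}-\tilde{\zeta})$ in the bracket, apply Young's convolution inequality to one piece and Minkowski's integral inequality to the other to obtain, for every $p\in[1,\infty]$,
\begin{equation*}
    \|\tilde{g}(s) - M_0 G_1\|_{L^p(\R^2)} \leq \|G_{1-e^{-s}} - G_1\|_{L^p(\R^2)}\,\|f_0\|_{L^1(\R^2)} + \int_{\R^2} \omega_p(\tilde{\nu},\tilde{\zeta})\,|F_s(\tilde{\nu},\tilde{\zeta})|\,d\tilde{\nu}\,d\tilde{\zeta},
\end{equation*}
where $\omega_p(y) := \|G_1(\cdot - y) - G_1\|_{L^p(\R^2)}$.

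The first summand vanishes as $s\to\infty$ because $G_{1-e^{-s}}\to G_1$ in $L^p$, which is immediate from the explicit Gaussian form \eqref{eq:Kernel} and the $L^p$ bound of Lemma~\ref{lem:GreenDecay}. For the second, fix $\eta>0$, choose $\varepsilon>0$ small enough that $\omega_p(y)\leq\eta$ for $|y|\leq\varepsilon$, bound the integral on $\{|y|\leq\varepsilon\}$ by $\eta\,\|f_0\|_{L^1}$ and on $\{|y|>\varepsilon\}$ by $2\|G_1\|_{L^p}\int_{|y|>\varepsilon}|F_s|\,dy$, which tends to $0$ as $s\to\infty$ by the tail estimate above; letting $\eta\to 0$ at the end gives the conclusion. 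The translation continuity $\omega_p(y)\to 0$ as $y\to 0$ holds for $p\in[1,\infty)$ by density of $C_c^\infty(\R^2)$ in $L^p$, and for $p=\infty$ by uniform continuity of the smooth, rapidly decaying Gaussian $G_1$.

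The main subtlety is the case $p=\infty$, where the usual density argument for translation continuity fails; what rescues the argument is the fact that $G_1$ is bounded and uniformly continuous on $\R^2$. The assumption $f_0\in C^0(\R^2)$ in that case is not explicitly used in the estimate itself, but it ensures that the pointwise interpretation of $F_s$ concentrating to $M_0\,\delta_{(0,0)}$ is unambiguous.
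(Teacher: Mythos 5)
Your proof is correct and follows essentially the same route as the paper: the identical self-similar kernel identity gives $\tilde{g}(s)=G_{1-e^{-s}}*\gamma_s$ with $\gamma_s(\tilde{\nu},\tilde{\zeta})=e^{2s}f_0(e^{s/2}\tilde{\nu},e^{3s/2}\tilde{\zeta})$ concentrating to $M_0\,\delta_{(0,0)}$, and the same splitting into $(G_{1-e^{-s}}-G_1)*\gamma_s$ plus the approximate-identity term. The only real differences are presentational: you dispatch $\|G_{1-e^{-s}}-G_1\|_{L^p(\mathbb{R}^2)}\to 0$ in one line, whereas the paper spends most of its proof on exactly this step (via the decomposition $G_{1-\sigma}=\sigma(2-\sigma)G_{1-\sigma}+\varphi_\sigma G_1$ and an eigenvalue estimate on the quadratic form), while you make the approximate-identity convergence explicit through the translation modulus $\omega_p$ and Minkowski's inequality, which lets you work directly with $f_0\in L^1(\mathbb{R}^2)$ and bypass the paper's closing density argument.
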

\begin{proof}
  Let us first assume that for every $f_0\in C^\infty_c(\R^2)$, the solution of \eqref{eq:Rotating} with initial Cauchy data $f_0$ is given by:
  \begin{equation*}
    g(t,v,z)=\left( G_t*f_0 \right)(v,z)=\int_{\mathbb{R}^2}\! G_t(\nu,\zeta)f_0(v-\nu,z-\zeta)d\nu d\zeta \qquad ( (t,v,z)\in\R_+\times\R\times\R).
  \end{equation*}
  Hence in terms of self-similar variables, we have, for every $(s,\tilde{v},\tilde{z})\in\R_+\times\R\times\R$,
  \begin{align*}
    \tilde{g}(s,\tilde{v},\tilde{z})
        &= e^{2s}g(e^s - 1, e^{s/2} \tilde{v}, e^{3s/2} \tilde{z})\\
        &= e^{2s} \int_{\R^2}\!
            G_{e^s-1}(\nu, \zeta)\,
            f_0(e^{s/2} \tilde{v} - \nu, e^{3/2s}\tilde{z} - \zeta)\, d\nu\, d\zeta \\
        &= e^{4s} \int_{\R^2}\!
            G_{e^s-1}(e^{s/2} \tilde{\nu}, e^{3s/2} \tilde{\zeta})\,
            f_0(e^{s/2}(\tilde{v} - \tilde{\nu}), e^{3/2s} (\tilde{z} - \tilde{\zeta}))\, d\tilde{\nu}\, d\tilde{\zeta}\\
        &= \frac{\sqrt{3}\, e^{2s}}{2\pi\, (1 - e^{-s})^2} \int_{\R^2}\!
            e^{\frac{1}{4(1-e^{-s})^3}
            ( 3 \tilde{\zeta}^2 + (2(1 - e^{-s}) \tilde{\nu} + 3\tilde{\zeta})^2)}\,
            f_0(e^{s/2}(\tilde{v}-\tilde{\nu}), e^{3/2s}(\tilde{z} - \tilde{\zeta}))\, d\tilde{\nu}\, d\tilde{\zeta}\\
        &= e^{2s} \int_{\R^2}\!
            G_{1 - e^{-s}}(\tilde{\nu}, \tilde{\zeta})\,
            f_0(e^{s/2}(\tilde{v} - \tilde{\nu}), e^{3/2s} (\tilde{z} - \tilde{\zeta}))\, d\tilde{\nu}\, d\tilde{\zeta}\\
        &= \left( G_{1 - e^{-s}}*\gamma_s \right)(\tilde{v},\tilde{z})\\
        &= \left( G_{1}*\gamma_s \right)(\tilde{v},\tilde{z}) + \left( (G_{1 - e^{-s}}-G_{1})*\gamma_s \right)(\tilde{v},\tilde{z}),
  \end{align*}
  with, $\gamma_s(\tilde{v},\tilde{z}) = e^{2s}f_0 \left( e^{s/2}\tilde{v}, e^{3/2s}\tilde{z} \right)$.\\
  One can see that $(\gamma_s)_{s\geqslant0}$ is an approximate identity sequence, and hence, we have for every $p\in[1,\infty]$,
  \begin{equation*}
    \lim_{s\to\infty}\|G_{1}*\gamma_s-M_0G_1\|_{L^p(\R^2)}=0.
  \end{equation*}
  with $\displaystyle M_0=\int_{\R^2}\! f_0(v,x)\, dv\, dx$.

  In addition, for every $(\sigma,\nu,\zeta)\in[0,1)\times\R\times\R$, we have
  \begin{align*}
    G_{1-\sigma}(\nu,\zeta)
    & =\frac{\sqrt{3}}{2\pi (1-\sigma)^2}\, \exp\left( \frac{-1}{(1-\sigma)^3}\left( 3\zeta^2-3(1-\sigma)\zeta\nu+(1-\sigma)^2\nu^2 \right) \right)\\
    & =\frac{\sqrt{3}}{2\pi}\left(\frac{1}{(1-\sigma)^2}-1\right)\, \exp\left( \frac{-1}{(1-\sigma)^3}\left( 3\zeta^2-3(1-\sigma)\zeta\nu+(1-\sigma)^2\nu^2 \right) \right)
    \\&\hskip 0.5 true cm
    +\frac{\sqrt{3}}{2\pi}\, \exp\left( \frac{-1}{(1-\sigma)^3}\left( 3\zeta^2-3(1-\sigma)\zeta\nu+(1-\sigma)^2\nu^2 \right) \right)\\
    & = \sigma(2-\sigma)G_{1-\sigma}(\nu,\zeta)
    \\&\hskip 0.5 true cm
    +\exp\left( \frac{-1}{(1-\sigma)^3}\left( 3\zeta^2-3(1-\sigma)\zeta\nu+(1-\sigma)^2\nu^2 \right)+\left( 3\zeta^2-3\zeta\nu+\nu^2 \right) \right)\, G_1(\nu,\zeta)\\
    & = \sigma(2-\sigma)G_{1-\sigma}(\nu,\zeta)
    \\&\hskip 0.5 true cm
    +\exp\left( \frac{-\sigma}{(1-\sigma)^3}\left( 3\left( 3-3\sigma+\sigma^2 \right)\zeta^2-3(1-\sigma)\left( 2-\sigma \right)\zeta\nu+(1-\sigma)^2\nu^2 \right) \right)\, G_1(\nu,\zeta)\\
    & =\sigma(2-\sigma)G_{1-\sigma}(\nu,\zeta)+\varphi_\sigma(\nu,\zeta)G_1(\nu,\zeta),
  \end{align*}
  with $\displaystyle \varphi_\sigma(\nu,\zeta)=\exp\left( -(\zeta\ \nu)A_\sigma(\zeta\ \nu)^\top \right)$, where we have defined
	$$A_\sigma=\frac{\sigma}{(1-\sigma)^3}\begin{pmatrix}3\left( 3-3\sigma+\sigma^2 \right) & \frac{3}{2}(1-\sigma)( 2-\sigma)\\ \frac{3}{2}(1-\sigma)( 2-\sigma) & (1-\sigma)^2\end{pmatrix}.$$
  One can easily compute that $\mathrm{det}\, A_\sigma>0$ and $\mathrm{Tr}\, A_\sigma>0$ for every $\sigma\in(0,1]$.
  This ensures that for every $\sigma\in(0,1)$, $\varphi_\sigma$ is bounded by $1$ on $\R^2$ and exponentially decayes to $0$ at infinity.
  In addition, the eigenvalues of $A_\sigma$ are of order $\sigma$ as $\sigma$ tends to $0$.\\
  Let us denote $\lambda_\sigma$ the biggest eigenvalue of $A_\sigma$.
  Then for every $R\in\R_+^*$, every $\sigma\in(0,1)$ and every $q\in[1,\infty]$, we have:
	\begin{multline*}
    \|\varphi_\sigma G_1-G_1\|_{L^q(\R^2)}
    = \left\|\left( \varphi_\sigma-1 \right)G_1\right\|_{L^q(B(R))} +\left\|\left( \varphi_\sigma-1 \right)G_1\right\|_{L^q(\R^2\setminus B(R))}\\
    \leqslant \left( 1-e^{-\lambda_\sigma R}\right)\|G_1\|_{L^q(\R^2)}+\left\|G_1\right\|_{L^q(\R^2\setminus B(R))},
	\end{multline*}
  where $B(R)\subset\R^2$ is the ball centered in $0$ of radius $R$.
  Consequently, since $\displaystyle\lim_{\sigma\to0}\lambda_\sigma=0$ and $G_1$ decays exponentially to $0$ at infinity, by taking $R=\frac{1}{\sqrt{\lambda_\sigma}}$, it follows that $\displaystyle\lim_{\sigma\to0}\|\varphi_\sigma G_1-G_1\|_{L^q(\R^2)}=0$.\\
  Hence, using Young's inequality, we end up with:
  \begin{equation*}
    \lim_{s\to\infty} \|(G_{1 - e^{-s}}-G_{1})*\gamma_s\|_{L^p(\R^2)} = 0
        \qquad (p \in[1,\infty]).
  \end{equation*}

  All in all,
  \begin{equation*}
    \lim_{s\to\infty} \|\tilde{g}(s,\cdot,\cdot) - M_0 G_1\|_{L^p(\R^2)} = 0
        \qquad (p\in[1,\infty]),
  \end{equation*}
  with $\displaystyle M_0=\int_{\R^2}\! f_0(v,x)\, dv\, dx$.
  Finally, the result follows from density arguments.
\end{proof}

\section{Discretization Schemes} \label{sec:Methods}
In this section we introduce the numerical discretizations used to simulate the
various forms of the Kolmogorov equations, i.e. \eqref{eq:Kolmogorov},
\eqref{eq:Rotating}, and \eqref{eq:SelfSimilar}. This is needed to compare the
effectiveness of the self-similarity change of variables introduced in
\autoref{sec:Scheme}, which is discuss in \autoref{sec:Results}.

Since the numerical simulations will be performed in a truncated domain, let us first start this section with some results on the behavior of the solution of \eqref{eq:Kolmogorov}, \eqref{eq:Rotating} and \eqref{eq:SelfSimilar} in a bounded (rectangular) domain with homogeneous Dirichlet boundary conditions.
In addition, in the self-similar formulation, we will also present a convergence result for the {\sl trucated} solution to the {\sl full} one as the size of the domain goes to infinity.\\
In a second paragraph, we will derive the weak forms and finally, in the last paragraph of this section, we will give splitting algorithm and the associated numerical schemes.

\subsection{Restriction to a bounded domain} \label{sse:BoundedDomain}

In this subsection we aim to study the impact of changing the space domain
$\R^2$ to a bounded (rectangular) domain
$\Omega=\Omega_1\times\Omega_2\subset\R^2$ with homogeneous Dirichlet boundary
conditions.\\ Let us first remind that according to Corollary~\ref{cor:DecayRate}, the decay in $L^2$-norm of the solution $f$ set in the
whole space $\R^2$ is polynomial.  However, we will see that the solutions of the
equations in the original form \eqref{eq:Kolmogorov} and the Lagrangian form
\eqref{eq:Rotating} converge exponentially to $0$, when we truncate the space
domain to a bounded one. 


\subsubsection{The Original Form} \label{sss:KFEM}

Let us consider \eqref{eq:Kolmogorov} in the bounded domain $\Omega$, i.e.:
\begin{align*}
  \partial_t f & = v\partial_x f+\partial_v^2 f && \text{in }\R_+^*\times\Omega,\\
  f & = 0 && \text{on }\R_+^*\times\partial\Omega,\\
  f & = f_0 &&\text{on }\{0\}\times\Omega.
\end{align*}
Then, it is easy to see that:
$$\tfrac{1}{2}\frac{\partial}{\partial t}\|f(t)\|_{L^2(\Omega)}^2=-\|\partial_v f(t)\|_{L^2(\Omega)}^2\qquad (t>0).$$
Now using a Poincar\'e inequality, it easily follow:
$$\frac{\partial}{\partial t}\|f(t)\|_{L^2(\Omega)}^2\leqslant \frac{-4}{|\Omega_1|^2}\|f(t)\|_{L^2(\Omega)}^2,$$
with $|\Omega_1|$ the length of $\Omega_1$ (we remind that $\Omega=\Omega_1\times\Omega_2$ is a rectangular domain) and hence, we obtain:
$$\|f(t)\|_{L^2(\Omega)}\leqslant e^{\frac{-2t}{|\Omega_1|^2}}\|f_0\|_{L^2(\Omega)}.$$
Consequently, this direct simulation cannot be used in order to capturing the long
time behavior of the solution, since the expected decay rate is polynomial.

\subsubsection{The Lagrangian Form} \label{sss:RFEM}

Let us consider \eqref{eq:Rotating} in the bounded domain $\Omega$, i.e.:
\begin{align*}
  \partial_t g & = \partial^2_v g+2t\partial_v\partial_z g+t^2\partial_z^2 g&& \text{in }\R_+^*\times\Omega,\\
  g & = 0 && \text{on }\R_+^*\times\partial\Omega,\\
  g & = f_0 &&\text{on }\{0\}\times\Omega.
\end{align*}
Then, it is easy to see that:
$$\tfrac{1}{2}\frac{\partial}{\partial t}\|g(t)\|_{L^2(\Omega)}^2=-\|\partial_v g+t\partial_z g\|_{L^2(\Omega)}^2\qquad (t>0).$$
Which can be estimated, using a Poincar\'e inequality (see Lemma~\ref{lem:PCcurved}) and gives:
$$\|g(t)\|_{L^2(\Omega)}\leqslant
\exp\left(\frac{-2}{|\Omega_1|^2}\int_0^t\! C_{\Omega}^2(s)\, d s\right)\|f_0\|_{L^2(\Omega)},$$
where $C_{\Omega}(s)$ is given by Lemma~\ref{lem:PCcurved}.
From the expression of $C_{\Omega}(s)$, one can see that for $t$ large enough,
$\int_0^t\! C_{\Omega}^2(s)\, d s$ is a polynomial of degree $3$.
Consequently, this simulation cannot be valid in order to capture the  long time behavior of the solution, since we obtained an exponential decay rate of the solution.

In the above statement, we have used the following Poincar\'e inequality:
\begin{lemma}\label{lem:PCcurved}
  Let $\Omega=\Omega_1\times\Omega_2\subset\R^2$ be a rectangular bounded domain.
  Then for every $g\in H^1_0(\Omega)$ and every $t\geqslant0$, we have:
  $$\|g\|_{L^2(\Omega)}\leqslant \frac{|\Omega_1|}{\sqrt{2}C_{\Omega}(t)}\|\partial_v g+t\partial_z g\|_{L^2(\Omega)},$$
  with $C_{\Omega}(t)=\begin{cases}
    1 & \text{if }0\leqslant \frac{|\Omega_2|}{|\Omega_1|}t\leqslant 1,\\
    \frac{|\Omega_2|}{|\Omega_1|}t & \text{if }\frac{|\Omega_2|}{|\Omega_1|}t>1.
  \end{cases}$
\end{lemma}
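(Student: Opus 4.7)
The plan is to reduce the bound to a one-dimensional Poincaré inequality by straightening the characteristic field of $\partial_v + t\partial_z$ via a measure-preserving shear. For $t=0$ the statement is just the standard 1D Poincaré in the $v$-direction, with constant $|\Omega_1|/\sqrt{2}$, so I would concentrate on the case $t>0$.

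First, I would perform the change of variables $(v,z)\mapsto(v,\zeta)$ given by $\zeta := z - tv$, and set $\tilde g(v,\zeta) := g(v,\zeta+tv)$. The Jacobian equals one, so $L^2$-norms are preserved, and by the chain rule
\begin{equation*}
  \partial_v \tilde g(v,\zeta) = (\partial_v g + t\partial_z g)(v,\zeta+tv),
\end{equation*}
so that $\|\partial_v \tilde g\|_{L^2(\tilde\Omega)} = \|\partial_v g + t\partial_z g\|_{L^2(\Omega)}$. After translating if necessary to $\Omega_1=(0,|\Omega_1|)$ and $\Omega_2=(0,|\Omega_2|)$, the image $\tilde\Omega$ is the parallelogram with vertices $(0,0)$, $(|\Omega_1|, -t|\Omega_1|)$, $(|\Omega_1|, |\Omega_2|-t|\Omega_1|)$, $(0,|\Omega_2|)$, and the vanishing boundary trace of $g$ on $\partial\Omega$ is transported to a vanishing boundary trace of $\tilde g$ on $\partial\tilde\Omega$.

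Next, for almost every $\zeta$ in the projection of $\tilde\Omega$ onto the $\zeta$-axis, the horizontal slice $I_\zeta := \{v:(v,\zeta)\in\tilde\Omega\}$ is an open interval and $\tilde g(\cdot,\zeta)$ vanishes at its endpoints. Applying the Cauchy-Schwarz Poincaré bound $\|u\|_{L^2(I)}^2 \leq \tfrac{|I|^2}{2}\|u'\|_{L^2(I)}^2$, which follows from $u(v)=\int_{\min I}^v u'(s)\,ds$ and Cauchy-Schwarz just as in Section~\ref{sss:KFEM}, then integrating in $\zeta$ and using Tonelli, I would obtain
\begin{equation*}
  \|g\|_{L^2(\Omega)}^2 \;\leq\; \frac{\ell_{\max}^2}{2}\,\|\partial_v g + t\partial_z g\|_{L^2(\Omega)}^2, \qquad \ell_{\max} := \sup_\zeta |I_\zeta|.
\end{equation*}

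It then remains to compute $\ell_{\max}$ and identify it with $|\Omega_1|/C_\Omega(t)$. Writing $I_\zeta = \bigl(\max(0,-\zeta/t),\,\min(|\Omega_1|,(|\Omega_2|-\zeta)/t)\bigr)$ and tracking which side of $\tilde\Omega$ determines each endpoint yields two regimes according to the relative size of the shear $t|\Omega_1|$ and the height $|\Omega_2|$: in the small-shear regime the full horizontal width $|\Omega_1|$ is attained on an interval of $\zeta$'s, while in the large-shear regime both endpoints of $I_\zeta$ are cut by the slanted sides and the maximum length decays like $1/t$. The main obstacle is this final geometric case analysis: although elementary, it is the only place where the particular piecewise form of $C_\Omega(t)$ is pinned down, and it requires carefully locating the $\zeta$ at which $\ell_{\max}$ is attained in each regime and checking continuity at the transition threshold.
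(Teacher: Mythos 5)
Your argument is the paper's own proof in different clothing: the paper parametrizes the characteristic segments of $\partial_v+t\partial_z$ inside the rectangle via the substitution $(v,z)\mapsto(w+s,ts)$, applies the one-dimensional Poincar\'e bound $\|u\|_{L^2(I)}^2\leqslant\tfrac{|I|^2}{2}\|u'\|_{L^2(I)}^2$ along each segment, and then takes the supremum of the segment lengths $\beta(w)-\alpha(w)$, which is exactly what your shear $\zeta=z-tv$ followed by horizontal slicing of the resulting parallelogram accomplishes (your $\ell_{\max}$ is their $\sup_w(\beta(w)-\alpha(w))$). The shear formulation is arguably cleaner, since it avoids the explicit three-piece decomposition of the region $D_t$.

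One point deserves attention before you finish the case analysis. Your slice formula gives $\ell_{\max}=\min\{|\Omega_1|,\,|\Omega_2|/t\}$, with the transition at $t|\Omega_1|=|\Omega_2|$, hence the constant $\tfrac{1}{\sqrt{2}}\min\{|\Omega_1|,|\Omega_2|/t\}=\tfrac{|\Omega_1|}{\sqrt{2}\max\{1,\,t|\Omega_1|/|\Omega_2|\}}$. This agrees with the paper's computation on the unit square (where it obtains $\tfrac12$ for $t\leqslant1$ and $\tfrac{1}{2t^2}$ for $t>1$) once the rescaling $v=|\Omega_1|\hat v$, $z=|\Omega_2|\hat z$ is carried out carefully — the effective slope on the unit square is $t|\Omega_1|/|\Omega_2|$ — but it does \emph{not} agree with the printed $C_\Omega(t)$, whose threshold is $\tfrac{|\Omega_2|}{|\Omega_1|}t=1$: the aspect ratio there is inverted relative to what the proof actually yields. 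So your regime criterion (shear $t|\Omega_1|$ versus height $|\Omega_2|$) is the geometrically correct one, and the discrepancy is a slip in the lemma's statement (which propagates to the definition of $s_0$ in \autoref{sss:SSFEM}), not a gap in your argument. The only genuinely missing piece in your write-up is the short verification that the claimed $\ell_{\max}$ is attained in each regime, which is routine as you say.
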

\begin{proof}
  Let us prove the result for $g\in C^\infty_0(\Omega)$, the global result will follow from density arguments.
  Let us also notice that with a simple change of variables, we can assume that $\Omega=(0,1)^2$.\\
  For every $(v_0,z_0)\in\partial\Omega$ and every $s\in\R$, we have:
  \begin{multline*}
    g(v_0+s,z_0+ts)
    =\int_0^s\! \frac{\partial}{\partial\sigma} g(v_0+\sigma,z_0+t\sigma)\, d \sigma\\
    =\int_0^s\! \left( \partial_v g(v_0+\sigma,z_0+t\sigma)+t\partial_zg(v_0+\sigma,z_0+t\sigma) \right)\, d\sigma.
  \end{multline*}
  In the above relation, we have extend $g$ to a $C^\infty_c(\R^2)$ function.
  Hence,
  \begin{equation}\label{eq:pcstart}
    |g(v_0+s,z_0+ts)|^2\leqslant s\int_0^s\! |\partial_v g(v_0+\sigma,z_0+t\sigma)+t\partial_zg(v_0+\sigma,z_0+t\sigma)|^2\, d\sigma.
  \end{equation}
  Let us now notice:
  $$\|g\|_{L^2(\Omega)}^2=\int_{\Omega}\! |g(v,z)|^2\, dz\, dv =
    t\int_{D_t}\! |g(w+s,ts)|^2\, ds\, dw.$$
  where we have used the change of variable $(v,z)\to (w+s,ts)$, and $D_t$ is given by:
  $$D_t=\begin{cases}
    \left\{ (w,s),\ w\in\left( -\frac{1}{t},1-\frac{1}{t} \right),\ s\in\left( -w,\frac{1}{t} \right) \right\}\\
    \hskip 1.0 true cm \cup\left\{ (w,s),\ v\in\left( 1-\frac{1}{t},0 \right),\ s\in\left( -w,1-w \right) \right\}\\
    \hskip 1.0 true cm \cup\left\{ (w,s),\ v\in\left( 0,1 \right),\ s\in\left( 0,1-w \right) \right\}
    & \qquad \text{if }0<t\leqslant1,\\
    \left\{ (w,s),\ w\in\left( -\frac{1}{t},0 \right),\ s\in\left( -w,\frac{1}{t} \right) \right\}\\
    \hskip 1.0 true cm \cup\left\{ (w,s),\ v\in\left( 0,1-\frac{1}{t} \right),\ s\in\left( 0,\frac{1}{t} \right) \right\}\\
    \hskip 1.0 true cm \cup\left\{ (w,s),\ v\in\left( 1-\frac{1}{t},1 \right),\ s\in\left( \frac{1}{t},1-w \right) \right\}
    & \qquad \text{if }t>1.
  \end{cases}$$
  Consequently, we have:
  $$\frac{1}{t}\|g\|_{L^2(\Omega)}^2=\begin{cases}
    \displaystyle{\int_{-\frac{1}{t}}^{1-\frac{1}{t}}\int_{-w}^{\frac{1}{t}}\! |g(w+s,ts)|^2\, ds\, dw}
    +\displaystyle{\int_{1-\frac{1}{t}}^{0}\int_{-w}^{1-w}\! |g(w+s,ts)|^2\, ds\, dw}
    \\\hskip 1.0 true cm
    +\displaystyle{\int_{0}^{1}\int_{0}^{1-w}\! |g(w+s,ts)|^2\, ds\, dw}
    & \qquad \text{if }0<t\leqslant1,\\[1em]
    \displaystyle{\int_{-\frac{1}{t}}^{0}\int_{-w}^{\frac{1}{t}}\! |g(w+s,ts)|^2\,
ds\, d w}
    +\displaystyle{\int_{0}^{1-\frac{1}{t}}\int_{0}^{\frac{1}{t}}\! |g(w+s,ts)|^2\,
ds\, d w}
    \\\hskip 1.0 true cm
    +\displaystyle{\int_{1-\frac{1}{t}}^{1}\int_{\frac{1}{t}}^{1-w}\! |g(w+s,ts)|^2\,
d s\, d w}
    & \qquad \text{if }t>1.
  \end{cases}$$
  All the integrals in the above expression are of the form:
  $$\int_{a}^{b}\int_{\alpha(w)}^{\beta(w)}\! |g(w+s,ts)|^2\, d s\, d w,$$
  where for every $w\in[a,b]$, we have $(w+\alpha(w),t\alpha(w))\in\partial\Omega$.\\
  We have
  \begin{align*}
    & \int_{a}^{b}\int_{\alpha(w)}^{\beta(w)}\! |g(w+s,ts)|^2\, ds\, d w
    \\ & \hskip 1.0 true cm
    =
    \int_{a}^{b}\int_0^{\beta(w)-\alpha(w)}\! |g(w+\alpha(w)+s,ts+t\alpha(w))|^2\,
    ds\, d w
    \\ & \hskip 1.0 true cm
    \leqslant \int_{a}^{b}\int_0^{\beta(w)-\alpha(w)}s\int_0^s\! |\partial_wg(w+\alpha(w)+\sigma,t\sigma+t\alpha(w))+t\partial_zg(w+\alpha(w)+\sigma,t\sigma+t\alpha(w))|^2\,
    d\sigma\, ds\, dw
    \\ & \hskip 1.0 true cm
    \leqslant
    \int_{a}^{b}\int_0^{\beta(w)-\alpha(w)}s\int_{\alpha(w)}^{\beta(w)}\,
    |\partial_wg(w+\sigma,t\sigma)+t\partial_zg(w+\sigma,t\sigma)|^2\, d\sigma\,
    ds\, dw
    \\ & \hskip 1.0 true cm
    \leqslant
    \int_{a}^{b}\frac{(\beta(w)-\alpha(w))^2}{2}\int_{\alpha(w)}^{\beta(w)}\,
    |\partial_wg(w+\sigma,t\sigma)+t\partial_zg(w+\sigma,t\sigma)|^2\, d\sigma\, d w
    \\ & \hskip 1.0 true cm
    \leqslant \frac{1}{2}\sup_{w\in[a,b]}\left( (\beta(w)-\alpha(w))^2
    \right)\int_{a}^{b}\int_{\alpha(w)}^{\beta(w)}\! |\partial_wg(w+\sigma,t\sigma)+t\partial_zg(w+\sigma,t\sigma)|^2\, d\sigma\,
    ds\, d w.
  \end{align*}
  Consequently, using the  estimate on $|g(w_0+s,z_0+ts)|^2$, inequality \eqref{eq:pcstart}, we obatin:
  $$\|g\|_{L^2(\Omega)}^2\leqslant\begin{cases}
    \displaystyle{\frac{t}{2}\int_{D_t}\! |\partial_w
        g(w+s,ts)+t\partial_zg(w+s,ts)|^2\, ds\, dw}
    & \text{if }0<t\leqslant 1,\\[1em]
    \displaystyle{\frac{1}{2t}\int_{D_t}\! |\partial_w
        g(w+s,ts)+t\partial_zg(w+s,ts)|^2\, ds\, dw}
    & \text{if }t>1,
  \end{cases}$$
  and hence, going back to $[0,1]^2$, leads to:
  $$\|g\|_{L^2(\Omega)}^2\leqslant\begin{cases}
    \dfrac{1}{2}\|\partial_w g+t\partial_zg\|_{L^2(\Omega)}^2
    & \text{if }0<t\leqslant 1,\\[1em]
    \dfrac{1}{2t^2}\|\partial_w g+t\partial_zg\|_{L^2(\Omega)}^2
    & \text{if }t>1.
  \end{cases}$$
  This can be simplified to:
  $$\|g\|_{L^2(\Omega)}^2\leqslant \frac{1}{\sqrt{2}C(t)}\|\partial_w g+t\partial_zg\|_{L^2(\Omega)}^2\qquad (t>0),$$
  with $C(t)=\begin{cases}
    1 & \text{if }0<t\leqslant1,\\
    t & \text{if }t>1.
  \end{cases}$\\
  Going back to the original domain $\Omega=\Omega_1\times\Omega_2$ leads to the result.
\end{proof}

\subsubsection{The Self-Similar Form} \label{sss:SSFEM}

Let us finally consider \eqref{eq:SelfSimilar} in the bounded domain $\Omega$,
i.e.
\begin{align}\label{TruncatedSS}\nonumber
  \partial_s\tg & = \partial_{\tv}^2\tg+2(1-e^{-s})\partial_{\tv}\partial_{\tz}\tg+(1-e^{-s})^2\partial_{\tz}^2\tg+\tfrac{1}{2}\tv\partial_{\tv}\tg+\tfrac{3}{2}\tz\partial_{\tz}\tg+2\tg && \text{in }\R^*_+\times\Omega,\\\nonumber
  \tg & = 0 && \text{on }\R_+^*\times\partial\Omega,\\
  \tg & = f_0 && \text{on }\{0\}\times\Omega.
\end{align}
Then, it is easy to see that:
$$\tfrac{1}{2}\frac{\partial}{\partial s}\|\tg(s)\|_{L^2(\Omega)}^2=-\|\partial_{\tv}\tg+(1-e^{-s})\partial_{\tz}\tg\|_{L^2(\Omega)}^2+\|\tg(s)\|_{L^2(\Omega)}^2\qquad (s>0).$$
Consequently, using a Poincar\'e inequality (see Lemma~\ref{lem:PCcurved}), we
obtain:
$$\frac{\partial}{\partial s}\|\tg(s)\|_{L^2(\Omega)}^2\leqslant 2\left( 1-\tfrac{2C_\Omega^2(1-e^{-s})}{|\Omega_1|^2} \right)\|\tg(s)\|_{L^2(\Omega)}^2,$$
where $C_\Omega(t)$ is given by Lemma~\ref{lem:PCcurved}, and hence,
\begin{equation}\label{conditiona}
\|\tg(s)\|_{L^2(\Omega)}\leqslant \exp\left(
s-\tfrac{2}{|\Omega_1|^2}\int_0^s\! C_\Omega^2(1-e^{-\sigma})\, d\sigma \right)\|f_0\|_{L^2(\Omega)}.
\end{equation}

From the expression of $C_\Omega$, it follows that:
\begin{enumerate}
  \item if $|\Omega_2|\leqslant|\Omega_1|$, then $C_\Omega=1$, and
    $$\int_0^s\! C_\Omega^2(1-e^{-\sigma})\, d\sigma=s+1-e^{-s}\qquad (s\geqslant0)\, ;$$
  \item if $|\Omega_2|>|\Omega_1|$, then,\\
    let us define $s_0=\log\left( \frac{|\Omega_2|}{|\Omega_2|-|\Omega_1|} \right)$, i.e. $\frac{|\Omega_2|}{|\Omega_1|}(1-e^{-s_0})$, so that, if $s\leqslant s_0$, then $C_\Omega=1$, and
    $$\int_0^s\! C_\Omega^2(1-e^{-\sigma})\, d\sigma=s+1-e^{-s}\qquad (0\leqslant s\leqslant s_0)$$
    and if $s>s_0$, we have:
    \begin{align*}
      \int_0^s\! C_\Omega^2(1-e^{-\sigma})\, d\sigma
      & = s_0+\frac{|\Omega_2|^2}{|\Omega_1|^2}\int_{s_0}^s\! (1-e^{-\sigma})^2\, d\sigma\\
      & = s_0+\frac{|\Omega_2|^2}{|\Omega_1|^2}\left( s-s_0+2e^{-s}-2e^{-s_0}-\frac{1}{2}e^{-2s}+\frac{1}{2}e^{-2s_0} \right)
      && (s_0<s).
    \end{align*}
\end{enumerate}
According to Theorem~\ref{thm:LongTimeBehaviorH} we expect that the solution is not
decaying to 0.  In order to avoid the exponential convergence to $0$ as time
tends to infinity, we have to chose $\Omega=\Omega_1\times\Omega_2$ the solution
$\tg$ is not decaying to $0$.  Form the above expressions, it easily follows
that in order to ensure this condition, we must have:
\begin{equation}\label{condition}
    |\Omega_1| > \sqrt{2} \text{ if } |\Omega_2|\leqslant|\Omega_1|
                    \quad \text{and} \quad
    |\Omega_2| > \frac{1}{\sqrt{2}} |\Omega_1|^2 \text{ if }
                    |\Omega_2|>|\Omega_1|\,.
\end{equation}
\medskip

Ley us conclude this paragraph, with a result stating that if the truncated domain is converging to the full space, then the {\sl truncated solution} is also going to the {\sl full} one.
\begin{proposition}\label{Propo:ConvergenceTruncated}
 Suppose that $f_0\in C_c(\mathbb{R}^2)$. Suppose that $\{\Omega_N\}$ satisfies
 $$\bigcup_{N\in\mathbb{N}}\Omega_N=\mathbb{R}^2\quad\text{and}\quad \Omega_1\subset \Omega_2\subset \dots \subset \Omega_N\subset \dots \subset \mathbb{R}^2.$$
 The equation
 \begin{subequations}\label{eqR}
	 \begin{align}
		 \nonumber
		 \partial_s g_N &= \partial_{\tv}^2 g_N + 2\,(1-e^{-s})\, \partial_{\tv}
		 \partial_{\tz} g_N + (1-e^{-s})^2 \partial_{\tz}^2 g_N\\
		 & \hskip 3.5 true cm +\tfrac{1}{2}\tv\, \partial_{\tv} g_N + \tfrac{3}{2}\tz\, \partial_{\tz}g_N + 2g_N
		 && \text{in }\R^*_+\times\Omega_N,\\
		 g_N &= 0
		 && \text{on }\R_+^*\times\partial\Omega_N,\\
		 \nonumber
		 g_N &= f_0
		 && \text{on }\{0\}\times\Omega_N,
	 \end{align}
 \end{subequations}
 has a unique solution $g_N$ in $C^1([0,\infty),H_0^1(\Omega_N)\cap H^2(\Omega_N))$. Moreover, $\{g_N\}$ converges weakly to the solution $\tg$ of \eqref{TruncatedSS} in $L^2((0,T)\times\mathbb{R}^2)$, $\forall T>0$.
\end{proposition}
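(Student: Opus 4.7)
My plan is to proceed in three stages: first, establish the claimed well-posedness of \eqref{eqR} for each fixed $N$; second, derive uniform-in-$N$ energy bounds for the zero extensions of $g_N$ to $\mathbb{R}^2$; third, extract a weak limit and identify it with $\tg$.

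For existence and uniqueness on the fixed rectangle $\Omega_N$, I observe that the spatial principal part is $X_s^2$ with $X_s := \partial_{\tv} + (1-e^{-s})\partial_{\tz}$, so \eqref{eqR} is a linear, degenerate parabolic equation of H\"ormander type with smooth coefficients. Since $\Omega_N$ is bounded and the coefficients $\tv$, $\tz$, $(1-e^{-s})$ are bounded on $[0,T]\times\Omega_N$, a Faedo--Galerkin construction based on the Dirichlet--Laplace eigenbasis of $\Omega_N$ yields a weak solution in $L^2_{\mathrm{loc}}([0,\infty);H^1_0(\Omega_N))$; uniqueness follows from the energy identity below. To upgrade to $C^1([0,\infty);H^1_0(\Omega_N)\cap H^2(\Omega_N))$, I would first mollify $f_0$ to obtain initial data in $C_c^\infty(\Omega_N)$, apply hypoelliptic regularity and standard parabolic bootstrap to the mollified problem, and then recover the general case by continuous dependence on the data.

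For the uniform energy estimate, I test \eqref{eqR} against $g_N$ and integrate over $\Omega_N$. Integration by parts on the diffusion gives $-\|X_s g_N\|_{L^2(\Omega_N)}^2$ (the boundary terms vanish by the Dirichlet condition); the transport $\tfrac12\tv\partial_{\tv}+\tfrac32\tz\partial_{\tz}$ integrates to $-\|g_N\|_{L^2(\Omega_N)}^2$; combining with the $+2g_N$ term this yields
\begin{equation*}
  \tfrac12\frac{d}{ds}\|g_N(s)\|^2_{L^2(\Omega_N)}
  + \|X_s g_N(s)\|^2_{L^2(\Omega_N)}
  = \|g_N(s)\|^2_{L^2(\Omega_N)}.
\end{equation*}
Gr\"onwall gives $\|g_N(s)\|_{L^2(\Omega_N)}\leqslant e^s\|f_0\|_{L^2(\mathbb{R}^2)}$ and, reintegrating, a uniform bound on $\int_0^T\|X_s g_N\|_{L^2(\Omega_N)}^2\,ds$. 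Because $g_N(s)\in H^1_0(\Omega_N)$, its zero extension $\tilde g_N(s)$ lies in $H^1(\mathbb{R}^2)$ and inherits both bounds, so $\tilde g_N$ is bounded in $L^\infty(0,T;L^2(\mathbb{R}^2))$ and $X_s\tilde g_N$ in $L^2((0,T)\times\mathbb{R}^2)$, uniformly in $N$.

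For the passage to the limit, I extract a subsequence along which $\tilde g_N \rightharpoonup u$ weakly in $L^2((0,T)\times\mathbb{R}^2)$. For any $\varphi\in C_c^\infty([0,T)\times\mathbb{R}^2)$ there exists $N_0$ with $\mathrm{supp}(\varphi)\subset[0,T)\times\Omega_N$ for $N\geqslant N_0$, so $\varphi$ is an admissible test function for \eqref{eqR}. Integrating by parts to transfer every spatial and temporal derivative onto $\varphi$ produces pairings of $\tilde g_N$ against fixed $L^2$ functions, and the weak convergence lets us pass to the limit term by term; the initial-data contribution passes trivially since $f_0$, having compact support, coincides with its zero extension for $N\geqslant N_0$. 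The limit $u$ therefore satisfies the weak form of \eqref{TruncatedSS} on $\mathbb{R}^2$ with initial datum $f_0$; by uniqueness of the full-space Cauchy problem (guaranteed by the kernel representation in Proposition~\ref{prop:KernelForm}) we identify $u=\tg$, and as the limit is unique the whole sequence, not merely the extracted subsequence, converges weakly. The principal technical hurdle is the hypoelliptic existence step: degeneracy of the diffusion precludes a direct application of classical parabolic theory for the claimed $H^2$-regularity, which is why I bypass it by combining hypoelliptic regularity on smoothed data with a density argument; by contrast the passage to the limit is clean precisely because the second-order operator factorises as $X_s^2$, so no compactness in an unavailable full-$H^1$ topology is needed.
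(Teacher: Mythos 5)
Your proposal is correct and follows essentially the same route as the paper: a uniform $L^2$ energy bound obtained by testing against $g_N$ (the diffusion contributing $-\|\partial_{\tv}g_N+(1-e^{-s})\partial_{\tz}g_N\|^2$ and the drift plus zeroth-order terms a net $+\|g_N\|^2$), weak compactness of the zero extensions, integration by parts against compactly supported test functions whose support is eventually contained in $\Omega_N$, and identification of the weak limit with $\tg$ by uniqueness. The only differences are cosmetic: the paper outsources the existence/uniqueness step to a citation where you sketch a Galerkin/hypoelliptic argument, and it phrases the final step as a proof by contradiction where you use the equivalent subsequence-uniqueness argument.
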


\begin{proof}
The existence and uniqueness of $g_N$ is classical, see for example
\cite{Igari:DPD:1973}. We now prove the weak convergence of $\{g_N\}$ to $\tg$.
We suppose that there exists $T_0>0$, such that there is a subsequence, still
denoted by $\{g_N\}$, not converging to $\tg$ in
$L^2((0,T_0)\times\mathbb{R}^2)$.  According to \eqref{conditiona},
\begin{equation*}
    \|g_N(s)\|_{L^2(\Omega)}\leq e^{-1+e^{-s} }\|f_0\|_{L^2(\Omega)},
\end{equation*}
thus, there is a subsequence of $\{g_N\}$, denoted $\{g_N\}$ which converges to
$g_*$ in $L^2((0,T_0)\times\mathbb{R}^2)$.  Let $M$ be a positive integer, and
choose $\varphi_M$ in $C_c^\infty((0,T_0)\times\Omega_M)$, then taking
$\varphi_M$ to be a test function in \eqref{eqR}, with $N>M$, taking $\Gamma_M
:= (0, T_0) \times \Omega_M$, and integrating by parts, we get

\begin{eqnarray*}
    & &-\int_{\Gamma_M}\! g_N \partial_s \varphi_M\, d\tv\, d\tz
        = \int_{\Gamma_M}\! \partial_sg_N\varphi_M\, ds\, d\tv\, d\tz \\
    &=&\int_{\Gamma_M}\! \partial_{\tv}^2g_N \varphi_M\, ds\, d\tv\, d\tz
    + 2\int_{\Gamma_M}\! (1-e^{-s})\partial_{\tv}\partial_{\tz}g_N\varphi_M\, ds\, d\tv\, d\tz
    +\int_{\Omega_M}\! (1-e^{-s})^2\partial_{\tz}^2g_N\varphi_M, ds\, d\tv\, d\tz\\\
& &+\int_{\Gamma_M}\! \tfrac{1}{2}\tv\partial_{\tv}g_N\varphi_M, ds\, d\tv\,
d\tz+\tfrac{3}{2}\int_{\Gamma_M}\! \tz\partial_{\tz}g_N\varphi_M, ds\, d\tv\, d\tz+2\int_{\Gamma_M}\! g_N\varphi_M, ds\, d\tv\, d\tz\\\
&=&\int_{\Gamma_M}\! g_N \partial_{\tv}^2\varphi_M, ds\, d\tv\, d\tz+2\int_{\Gamma_M}(1-e^{-s})g_N\partial_{\tv}\partial_{\tz}\varphi_M, ds\, d\tv\, d\tz+(1-e^{-s})^2\int_{\Gamma_M}\! g_N\partial_{\tz}^2\varphi_M, ds\, d\tv\, d\tz\\\
& &-\tfrac{1}{2}\int_{\Gamma_M}\! g_N\partial_{\tv}(\tv\varphi_M)ds\, d\tv\, d\tz-\tfrac{3}{2}\int_{\Gamma_M}\! g_N\partial_{\tz}(\tz\varphi_M)ds\, d\tv\, d\tz+2\int_{\Gamma_M}\! g_N\varphi_M, ds\, d\tv\, d\tz.
\end{eqnarray*}
Let $N$ tend to $\infty$, we get
\begin{eqnarray*}
& &-\int_{\Gamma_M}\! g_*\partial_s\varphi_M, ds\, d\tv\, d\tz\\\
&=&\int_{\Gamma_M}\! g_* \partial_{\tv}^2\varphi_M, ds\, d\tv\,
d\tz+2\int_{\Gamma_M}\! (1-e^{-s})g_N\partial_{\tv}\partial_{\tz}\varphi_M, ds\,
d\tv\, d\tz+\int_{\Gamma_M}\! (1-e^{-s})^2g_*\partial_{\tz}^2\varphi_M, ds\, d\tv\, d\tz\\\
& &-\tfrac{1}{2}\int_{\Gamma_M}\! g_*\partial_{\tv}(\tv\varphi_M)ds\, d\tv\, d\tz-\tfrac{3}{2}\int_{\Gamma_M}\! g_*\partial_{\tz}(\tz\varphi_M)ds\, d\tv\, d\tz+2\int_{\Gamma_M}\! g_*\varphi_M, ds\, d\tv\, d\tz,
\end{eqnarray*}
which leads to
\begin{eqnarray}\label{eqR1}
& &\int_{\Gamma_M}\! \partial_sg_*\varphi ds\, d\tv\, d\tz \\\nonumber
&=&\int_{\Gamma_M}\! \partial_{\tv}^2g_* \varphi ds\, d\tv\,
d\tz+2\int_{\Gamma_M}\! (1-e^{-s})\partial_{\tv}\partial_{\tz}g_*\varphi_M, ds\,
d\tv\, d\tz+\int_{\Gamma_M}\! (1-e^{-s})^2\partial_{\tz}^2g_*\varphi ds\, d\tv\, d\tz\\\nonumber
& &+\int_{\Gamma_M}\! \tfrac{1}{2}\tv\partial_{\tv}g_*\varphi ds\, d\tv\,
d\tz+\tfrac{3}{2}\int_{\Gamma_M}\! \tz\partial_{\tz}g_*\varphi ds\, d\tv\, d\tz+2\int_{\Gamma_M}\! g_*\varphi ds\, d\tv\, d\tz, \forall \varphi\in C_c^\infty(\Omega_M).
\end{eqnarray}
Notice that \eqref{eqR1} is satisfied for all $\varphi$ in
$C_c^\infty(\Omega_M)$ and for all $M\in \mathbb{N}$. Thus it  is satisfied for
all $\varphi$ in $C_c^\infty(\mathbb{R}^2)$, which means $g_*$ is a weak
solution of \eqref{eq:SelfSimilar}. Hence $g_*=\tg$, and this is a
contradiction.
\end{proof}

Notice that the argument used in Proposition~\ref{Propo:ConvergenceTruncated} is also valid on the other forms \eqref{eq:Kolmogorov} and \eqref{eq:Rotating} of Kolmogorov equations.


\subsection{Weak forms and finite element discretization} \label{sse:Weak}

Let us first notice that for \eqref{eq:Kolmogorov} and \eqref{eq:SelfSimilar},
it is natural to use a splitting method between the coercive term and the
parabolic term.  Consequently, in this paragraph, we only present the finite
element discretization for \eqref{eq:Rotating}.\\ To be able to specify the weak
forms and the FE discretization to Equations \eqref{eq:Rotating}, we must first
specify a problem domain, time interval, and boundary conditions. For all
versions of the Kolmogorov equation we take the time interval, problem domain,
and boundary conditions to be $I=[0,T] \subset \R_+, \Omega \subsetneq \R^2$ (a
rectangular domain satisfying Condition \eqref{condition}), and
\begin{equation}
  f(v, x) = g(v,z) = \tilde{g}(\tilde{v},\tilde{z}) = 0 \text{ on } \partial
      \Omega,
  \label{eq:BCs}
\end{equation}
respectively.

Given the above domain, $\Omega$, and boundary conditions we can write the weak
form for \eqref{eq:Rotating} as
\begin{equation}\label{eq:RotatingWeak}
  (\partial_t g, \chi)+a_t(g,\chi) = 0
        \qquad (t\in(0,T)\, ,\ \chi \in H^1_0(\Omega))\, ,
\end{equation}
where we have defined the positive bilinear form:
$$a_t(g,\chi)= (\partial_v g, \partial_v \chi) + t^2 (\partial_z g, \partial_z \chi) + t \left( (\partial_v g, \partial_z \chi) + (\partial_z g, \partial_v \chi) \right)\, .$$

Let us now consider the finite element discretization used for \eqref{eq:Rotating}.
This discretization is quite standard.
That is, given $V^h$ the space of piecewise linear functions $\chi_h$,
where $V^h \subset H^1_0(\Omega)$, and a
triangulation $\mathcal{T}_h$ of the space $\Omega$ with average triangle
diameter, $h$, the FE discretization of \eqref{eq:Rotating} reads
\begin{center}Find $g_h \in V^h$ such that\end{center}
\begin{equation}\label{eq:RotatingFEM}
  (\partial_t g_h, \chi_h)+a_t(g_h,\chi_h)=0 \qquad (t\in(0,T)\, ,\ \chi_h \in V^h \subset H^1_0(\Omega))\, .
\end{equation}

The following error estimates with this finite error discretization could be obtained by classical arguments from the theory of finite element methods \cite{Thomee:GFE:1997}.
\begin{lemma}
  Let $g\in H^1_0(\Omega)$ (resp. $g_h\in V_h$) be the solution of \eqref{eq:RotatingWeak} (resp. \eqref{eq:RotatingFEM}).
  Then for every $T>0$, there exists $C(T)>0$ such that:
  $$\|g(t)-g_h(t)\|_{L^2(\Omega)}\leqslant C(T)h^2\, .$$
\end{lemma}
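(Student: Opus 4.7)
The plan is to adapt the classical Galerkin error analysis for parabolic problems (as in Thom\'ee's book) to the time-dependent, degenerate bilinear form $a_t$. I would proceed in four steps: (i) introduce a Ritz (elliptic) projection $R_h(t)\colon H^1_0(\Omega)\to V^h$ associated with $a_t$; (ii) split the error as $g-g_h=\rho+\theta$ with $\rho=g-R_h g$ and $\theta=R_h g-g_h\in V^h$; (iii) derive and test the evolution equation for $\theta$; and (iv) invoke Gronwall's inequality in combination with interpolation estimates on $\rho$.

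For step (i), since $a_t$ is only $L^2$-coercive via Lemma~\ref{lem:PCcurved} (its principal symbol $(\xi_v+t\xi_z)^2$ is degenerate), I would work with the perturbed form $\tilde a_t(u,v):=a_t(u,v)+(u,v)$, which is positive definite on $V^h$. Defining $R_h(t)g\in V^h$ through
$$\tilde a_t(g-R_h(t)g,\chi_h)=0\qquad(\chi_h\in V^h),$$
a C\'ea best-approximation argument gives $\|g-R_h(t)g\|_{H^1}\leq C(t)\,h\,\|g(t)\|_{H^2}$, and an Aubin--Nitsche duality lifts this to $\|g-R_h(t)g\|_{L^2}\leq C(t)\,h^2\,\|g(t)\|_{H^2}$. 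Differentiating the defining identity in $t$ and repeating the argument yields the analogous estimate on $\|\partial_t(g-R_h(t)g)\|_{L^2}$ in terms of $\|g\|_{H^2}$ and $\|\partial_t g\|_{H^2}$. For step (iii), subtracting \eqref{eq:RotatingFEM} from \eqref{eq:RotatingWeak} tested against $\chi_h\in V^h$ and using the Ritz orthogonality produces the error equation
$$(\partial_t\theta,\chi_h)+a_t(\theta,\chi_h)=-(\partial_t\rho,\chi_h)+(\rho,\chi_h)\qquad(\chi_h\in V^h).$$
Testing with $\chi_h=\theta$, exploiting $a_t(\theta,\theta)\geq 0$, Cauchy--Schwarz, and Gronwall (with $\theta(0)=0$ via the natural choice $g_h(0)=R_h(0)f_0$) gives $\|\theta(t)\|_{L^2(\Omega)}\leq C(T)\,h^2$, and the triangle inequality concludes.

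The main obstacle is the degeneracy and time-dependence of $a_t$: the Aubin--Nitsche argument requires $H^2$-regularity for the dual problem $\tilde a_t(\phi,v)=(\psi,v)$, which is a degenerate elliptic equation and hence non-standard. One may either invoke the hypoelliptic structure of the operator (the direction of degeneracy rotates with $t$, and the parabolic smoothing restores full regularity) or replace $R_h$ by the standard Laplacian Ritz projection and pay the price of an additional consistency term $a_t(\rho,\chi_h)$ on the right-hand side, which is absorbed into the energy dissipation $\|\partial_v\theta+t\partial_z\theta\|_{L^2}^2$ using Young's inequality. Either way, the constant $C(T)$ grows polynomially in $T$ through the factor $1+T^2$ coming from the operator bound $a_t(u,u)\leq(1+t^2)\|\nabla u\|_{L^2}^2$ on $H^1_0(\Omega)$, and one also needs $H^2$-regularity in space together with $H^2$-regularity in time of the exact solution, which follows from the smoothness of $f_0$ and the data on a rectangular $\Omega$.
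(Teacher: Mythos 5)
For context: the paper does not actually prove this lemma --- it only asserts that the estimate ``could be obtained by classical arguments from the theory of finite element methods'' and cites Thom\'ee --- so your proposal is an attempt to supply the argument the authors are gesturing at, and your template (Ritz projection, $\rho+\theta$ splitting, error equation, Gronwall) is indeed the one they have in mind. You also correctly isolate the real obstruction: $a_t(u,u)=\|\partial_v u+t\,\partial_z u\|_{L^2(\Omega)}^2$ has a rank-one principal part, so $a_t$ is coercive only in $L^2$ (via Lemma~\ref{lem:PCcurved}) and not in $H^1$. The difficulty is that neither of your two proposed repairs actually closes this gap at the claimed $O(h^2)$ rate.

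In Route A, the dual problem for $\tilde a_t$ reads $-(\partial_v+t\,\partial_z)^2\phi+\phi=\psi$: a constant-coefficient operator that controls only the second derivative in the single direction $(1,t)$ and gives no regularity transverse to it, so $\phi\notin H^2(\Omega)$ in general and the Aubin--Nitsche lift from $O(h)$ to $O(h^2)$ fails. Hypoellipticity does not rescue this: the Kolmogorov operator is hypoelliptic because of the commutator between the transport term and $\partial_v^2$, and that structure is absent from the frozen-time spatial form $\tilde a_t$. Even the preliminary C\'ea step is problematic, since $\tilde a_t$ controls only the seminorm $\|\partial_v\cdot+t\,\partial_z\cdot\|_{L^2}$ rather than the full $H^1$ norm, so quasi-optimality in $H^1$ does not follow. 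In Route B, the consistency term $a_t(\rho,\theta)$ survives, and after Young's inequality you are left with $\tfrac12(1+t^2)\|\nabla\rho\|_{L^2}^2\sim h^2$ inside the Gronwall integrand for $\|\theta\|_{L^2}^2$; this yields $\|\theta(t)\|_{L^2}\leqslant C(T)\,h$, i.e.\ only first-order convergence in $L^2$, not the stated second order. To obtain the optimal rate one would need either a projection adapted to $a_t$ whose $L^2$ error is provably $O(h^2)$ (for instance by exploiting the tensor/rotated structure of the operator on the rectangular domain), or a duality argument at the level of the parabolic problem itself, where hypoelliptic smoothing genuinely is available. This gap is not of your making --- it sits, silently, behind the paper's own appeal to ``classical arguments'' --- but as written your proof establishes $O(h)$ rather than $O(h^2)$.
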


\subsection{Operator Splitting Methods} \label{sec:Splitting}

Since \eqref{eq:Kolmogorov} and \eqref{eq:SelfSimilar} are not numerically
stable it is quite natural to use an operator splitting method. Thus in this
subsection we will introduce two operator splitting methods
for the simulation of \eqref{eq:Kolmogorov} and \eqref{eq:SelfSimilar}. The
operator splitting method for \eqref{eq:Kolmogorov} will be a second order
scheme, while the method used for the \eqref{eq:SelfSimilar} will be an exact
method.

\subsubsection{Operator Splitting for Kolmogorov Equation} \label{sss:KolmogorvSplitting}
As stated above the Kolmogorov equation, \ref{eq:Kolmogorov}, is not stable and
thus an operator method will be used in simulations. The operator
splitting method \autoref{alg:TSKolmogorov} instroduced in this subsection is
second order accurate and thus some care must be taken when simulating over long
times. Thus, we see that not only is there a problem with artificial boundary
conditions, but the accuracy of the method tends to cause problem.

\begin{algorithm}[!htb]
  \caption{Operator splitting method for Kolmogorov equation \eqref{eq:Kolmogorov}}
  \label{alg:TSKolmogorov}
  \begin{algorithmic}
    \STATE Given $\Delta t$ the time step, $\chi_h \in V^h \subset H^1_0(\Omega)$ and
    $\mathcal{T}^h$ a triangulation of $\Omega$ with given average triangle size, $h$ and given $f_0\in H^1_0(\Omega)$,
    \REPEAT \STATE
    \begin{enumerate}
      \item Solve the coercive term: find $\varphi^1_h\in V_h$ such that:
        \begin{subequations}
          \begin{equation}\label{eq:TSKolmogorovHeatPart}
            (\partial_t \varphi^1_h, \chi_h) + a_0(\varphi^1_h,\partial_v \chi_h)=0\qquad ( t\in(0,\Delta t)\, ,\ \chi_h\in V_h)\, ,
          \end{equation}
          \begin{equation}
            \varphi^1_h(0)=f^n_h\, .
          \end{equation}
        \end{subequations}
      \item Solve (analytically) the convective term,
        \begin{subequations}
          \begin{equation}
            \partial_t \varphi^2_h=v\partial_x\varphi^2_h\qquad ( t\in(0,\Delta t))\, ,
          \end{equation}
          \begin{equation}
            \varphi^2_h(0)=\varphi^1_h(\Delta t)\, .
          \end{equation}
        \end{subequations}
      \item Update solution,
        $$f^{n+1}_h=\varphi^2_h(\Delta t)\, .$$
    \end{enumerate}
    \UNTIL{$n\cdot \Delta t = T$}
  \end{algorithmic}
\end{algorithm}

The proof of the convergence of \autoref{alg:TSKolmogorov} can be seen in
\cite{MBinh}

\subsubsection{An Exact Splitting Scheme} \label{sss:ExactSplitting}
The self-similar version of the Kolmogorov, \eqref{eq:SelfSimilar}, is not
coercive and so a unique solution to the finite element discretization is not
guaranteed. To address this issue we can split the \eqref{eq:SelfSimilar} along
two operators. We will select these operators in such a way that both are
coercive and commute, since operators which commute result in a no error from
the operator splitting scheme.  The following theorem states that there exists
an operator splitting for \eqref{eq:SelfSimilar} which is exact.
\begin{theorem} \label{thm:Coercivity}
  There exists operators $K_{1,s}$ and $K_{2,s}$ such that \eqref{eq:SelfSimilar} can
  be written as
  \begin{equation*}
    \tilde{g}_s = K_{1,s}\, \tilde{g} + K_{2,s}\, \tilde{g}
  \end{equation*}
  with $K_1$ and $K_2$ coercive and the Lie bracket $[K_{1,s},K_{2,s}]$ is zero, i.e.
  there exists an operator splitting scheme which is exact.
\end{theorem}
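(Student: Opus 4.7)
The plan is to recast the self-similar equation in a form that makes a natural splitting transparent, and then to tune the free constants so that both pieces are simultaneously coercive and commute. First, I observe that the three second-order terms in \eqref{eq:SelfSimilar} combine into a perfect square: with $D_s := \partial_{\tilde{v}} + (1-e^{-s})\partial_{\tilde{z}}$, one has
\[
  \partial_{\tilde{v}}^2 + 2(1-e^{-s})\partial_{\tilde{v}\tilde{z}} + (1-e^{-s})^2\partial_{\tilde{z}\tilde{z}} = D_s^2.
\]
Writing also $S := \tfrac{1}{2}\tilde{v}\partial_{\tilde{v}} + \tfrac{3}{2}\tilde{z}\partial_{\tilde{z}}$, equation \eqref{eq:SelfSimilar} becomes $\partial_s\tilde{g} = D_s^2\tilde{g} + S\tilde{g} + 2\tilde{g}$, which isolates a second-order diffusive part along the direction $D_s$ from a first-order scaling part.

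The natural candidate splitting is $K_{1,s}\tilde{g} := D_s^2\tilde{g} + \alpha\tilde{g}$ and $K_{2,s}\tilde{g} := S\tilde{g} + (2-\alpha)\tilde{g}$ for a free scalar $\alpha$. The identity that $K_{1,s}+K_{2,s}$ equals the right-hand side of \eqref{eq:SelfSimilar} holds by construction. For coercivity on $H^1_0(\Omega)$, integration by parts gives $-(K_{1,s}\tilde{g},\tilde{g})_{L^2} = \|D_s\tilde{g}\|_{L^2}^2 - \alpha\|\tilde{g}\|_{L^2}^2$, which is bounded below using the Poincar\'e-type inequality of Lemma~\ref{lem:PCcurved} combined with the geometric condition \eqref{condition}; and the identities $\int \tilde{v}\,\partial_{\tilde{v}}\tilde{g}\cdot\tilde{g} = -\tfrac{1}{2}\|\tilde{g}\|_{L^2}^2$ together with its $\tilde{z}$-analogue yield $-(K_{2,s}\tilde{g},\tilde{g})_{L^2} = (\alpha-1)\|\tilde{g}\|_{L^2}^2$, coercive for $\alpha>1$. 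A single $\alpha$ in a suitable window then makes both pieces coercive simultaneously.

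The crux is the vanishing of the Lie bracket $[K_{1,s},K_{2,s}]$. Since scalar multiples of the identity commute with everything, this reduces to $[D_s^2,S]$, which unravels via the computation $[D_s,S] = \tfrac{1}{2}\partial_{\tilde{v}} + \tfrac{3}{2}(1-e^{-s})\partial_{\tilde{z}}$; this is not a scalar multiple of $D_s$ whenever $1-e^{-s}\ne 0$, so the naive splitting does \emph{not} commute. The main obstacle, therefore, is to modify the splitting so that the bracket vanishes identically. My strategy would be to perform the $s$-dependent change of variables $u=\tilde{v}$, $w=\tilde{z}-(1-e^{-s})\tilde{v}$ (which sends $D_s$ into $\partial_u$), rewrite $S$ in these coordinates, and then redistribute the first-order coefficients between $K_{1,s}$ and $K_{2,s}$ so that $K_{1,s}$ depends only on derivatives and scaling in the $u$-direction while $K_{2,s}$ depends only on $w\partial_w$ and a constant; operators acting on disjoint variables commute, which would yield the desired exact splitting while preserving the coercivity budget established above.
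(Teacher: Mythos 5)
Your proposal takes a genuinely different route from the paper's, and as written it does not close. The paper's resolution is much simpler than what you attempt: it keeps the \emph{entire} differential operator (diffusion, cross terms, and drift) inside $K_{1,s}=\Delta_{A(s)}+\mathbf{b}\cdot\nabla_{A(s)}+\sigma_1 I$ and peels off only a constant, $K_{2,s}=\sigma_2 I$ with $\sigma_1\le 1$ and $\sigma_2=2-\sigma_1$. The point is that the only obstruction to coercivity of the full operator is the zeroth-order coefficient $2$ exceeding $\tfrac12\nabla_{A(s)}\cdot\mathbf{b}=1$, so lowering it to $\sigma_1\le1$ restores coercivity of $K_{1,s}$, while a scalar multiple of the identity commutes with everything, making $[K_{1,s},K_{2,s}]=0$ immediate. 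You instead try to separate the diffusion $D_s^2$ from the drift $S$. Your observations up to the obstruction are correct: the second-order part is indeed $D_s^2$, and $[D_s,S]=\tfrac12\partial_{\tilde v}+\tfrac32(1-e^{-s})\partial_{\tilde z}$ is not proportional to $D_s$, so the naive splitting does not commute. But the repair you sketch fails, and you never actually exhibit a commuting pair, which is the entire content of the theorem.

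Concretely: (i) the change of variables $u=\tilde v$, $w=\tilde z-(1-e^{-s})\tilde v$ is $s$-dependent, so it alters the time derivative, $\partial_s\big|_{(\tilde v,\tilde z)}=\partial_s\big|_{(u,w)}-e^{-s}u\,\partial_w$, injecting a new first-order term your sketch does not account for. (ii) Even setting that aside, in the new coordinates $S=\tfrac12 u\partial_u+\tfrac32 w\partial_w+(1-e^{-s})u\,\partial_w$, and the cross term $u\partial_w$ cannot be assigned to either operator: $[\partial_u^2,\,u\partial_w]=2\partial_u\partial_w\neq0$ and $[u\partial_w,\,w\partial_w]=u\partial_w\neq0$, so no redistribution into a ``$u$-only'' piece and a ``$w$-only'' piece exists. (iii) A secondary issue: your simultaneous-coercivity window is $1<\alpha<2C_\Omega^2/|\Omega_1|^2$; since Lemma~\ref{lem:PCcurved} gives $C_\Omega=1$ for small $s$, this forces $|\Omega_1|<\sqrt2$, which contradicts the standing domain condition \eqref{condition} requiring $|\Omega_1|>\sqrt2$, so the window is empty on the domains actually used. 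The way out is to abandon the diffusion/drift separation altogether and take $K_{2,s}$ to be a multiple of the identity, as the paper does.
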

\begin{proof}
    First, we recast \eqref{eq:SelfSimilar} such that it resembles the
    advection-reaction-diffusion equation, i.e.
    \begin{equation} \label{eq:ADR}
      \partial_s \tilde{g} = \nabla_{A(s)} \cdot
          \left(\Lambda \nabla_{A(s)} \tilde{g}\right)
          + \mathbf{b} \cdot \nabla_{A(s)} \tilde{g} + \sigma \, \tilde{g},
    \end{equation}
    where
    \begin{align*}
      \sigma := 2&, ~~~
      \mathbf{b} := \begin{bmatrix}
        \dfrac{1}{2} \tilde{v} \\[1em] \dfrac{3}{2\, A(s)} \tilde{z}
      \end{bmatrix}, ~~~
      \nabla_{A(s)} := \begin{bmatrix}
        \partial_{\tilde{v}} \\
        A(s)\, \partial_{\tilde{z}}
      \end{bmatrix}, ~~~
      \Lambda := \begin{bmatrix}
        1 & 1 \\ 1 & 1
      \end{bmatrix}, ~~~
      A(s):=1-e^{-s}.
    \end{align*}
    With this notation in place we then define the Sobolev space with the associated norm
    \begin{equation*}
      {\overset{\circ}{H}}^1_{A(s)}(\Omega) := \left\{ u  :
        \nabla_{A(s)} u \in L^2(\Omega) \right\}\cap H_0^1(\Omega);~~~ \|u\|_{H^1_{A(s)}}^2 = \|u\|_{L^2(\Omega)}^2 + \|\nabla_{A(s)} u\|^2_{L^2(\Omega)};
    \end{equation*}
        and the weak form is then given by
    \begin{equation}
      (g_s, u) + (\Lambda \nabla_{A(s)} \tilde{g}, \nabla_{A(s)} u)
        - (\mathbf{b}\cdot \nabla_{A(s)}\tilde{g}, u) - (\sigma \tilde{g}, u) \quad
        \forall u \in {\overset{\circ}{H}}^1_{A(s)}(\Omega).
      \label{eq:weak}
    \end{equation}
    Now we define the bilinear form
    \begin{equation}
      a(\tilde{g},u) := (\Lambda \nabla_{A(s)} \tilde{g}, \nabla_{A(s)} u)
        - (\mathbf{b}\cdot \nabla_{A(s)}\tilde{g}, u) - (\sigma \tilde{g}, u)=0 \quad
        \forall u \in {\overset{\circ}{H}}^1_{A(s)}(\Omega).
      \label{eq:Bilinear}
    \end{equation}
    We take $u = \tilde{g}$
    \begin{equation*}
      a(\tilde{g},\tilde{g}) := (\Lambda \nabla_{A(s)} \tilde{g}, \nabla_{A(s)} \tilde{g})
        - (\mathbf{b}\cdot \nabla_{A(s)}\tilde{g}, \tilde{g}) - (\sigma \tilde{g}, \tilde{g})
    \end{equation*}
    and by the Poincar\'e inequality $\|\tilde{g}\| \le C_{\Omega}
    \|\nabla_{A(s)} \tilde{g}\|$ and so
    \begin{equation*}
      \|\tilde{g}\|_{H^1_{A(s)}}^2 = \|\tilde{g}\|_{L^2(\Omega)}^2 + \|\nabla_{A(s)} \tilde{g}\|^2
        \le (1 + C_{\Omega}^2) \|\nabla_{A(s)} \tilde{g}\|^2.
    \end{equation*}
    Thus we see that
    \begin{equation*}
      (\Lambda \nabla_{A(s)} \tilde{g}, \nabla_{A(s)} \tilde{g})
        \ge \frac{1}{1 + C_{\Omega}^2} \|v\|_{H^1_{A(s)}}^2.
    \end{equation*}
    For the other two terms we first use Green's formula
    \begin{equation*}
      (\mathbf{b}\cdot \nabla_{A(s)} \tilde{g}, \tilde{g})
        = \frac{1}{2} \int_{\Omega}\! \mathbf{b}
            \nabla_{A(s)} \left( \tilde{g}^2 \right) \, d\Omega
        = -\frac{1}{2} \int_{\Omega}\! \tilde{g}^2 \nabla_{A(s)}\cdot \mathbf{b} \, d\Omega
          + \frac{1}{2} \cancelto{0}{
            \int_{\partial\Omega}\! \mathbf{b}\cdot \mathbf{n}\, \tilde{g}^2 dS}
    \end{equation*}
    Thus,
    \begin{equation*}
        (\mathbf{b}\cdot \nabla_{A(s)}\tilde{g}, u) + (\sigma \tilde{g}, u)
          = \int_{\Omega}\! \tilde{g}^2
            \left(-\frac{1}{2} \nabla_{A(s)}\cdot \mathbf{b} + \sigma\right)\, d\Omega,
    \end{equation*}
    which is only non-positive when $-\frac{1}{2} \nabla_{A(s)} \cdot
    \mathbf{b} + \sigma \le 0$ a.e. in $\Omega$. Noting that $\nabla_{A(s)}
    \cdot \mathbf{b} = 2$ then we would require $\sigma \le 1,$ but $\sigma
    = 2$ and thus we do not have coercivity. Therefore if we choose the operators
    \begin{align}
      K_{1,s} &= \Delta_{A(s)} + \mathbf{b} \cdot \nabla_{A(s)} + \sigma_1 \, \mathbf{I},
        \label{eq:Coercive} \\
      K_{2,s} &= \sigma_2\, \mathbf{I}, \label{eq:Easy}
    \end{align}
    where $\sigma_1 \le 1$ and $\sigma_2 = 2 - \sigma_1$, then the operator in
    \eqref{eq:Coercive} is coercive and it is easy to see that $[K_{1,s},K_{2,s} ]=0$.
\end{proof}

Since \eqref{eq:SelfSimilar} is not coercive we cannot guarantee the Finite
Element solution to \eqref{eq:SelfSimilar} is unique. However, the analysis
above allows use to create an operator splitting method where our operators are
coercive, i.e. choose the operators \eqref{eq:Coercive}, \eqref{eq:Easy}
with $\sigma_1 \le 1$ and $\sigma_2 = 2 - \sigma_1$.

With these operators we can define the following operator splitting method: \\
\begin{algorithm}[H]
  \caption{Operator splitting method for the self-similar Kolmogorov
    equation \eqref{eq:SelfSimilar}}
  \label{alg:SimilaritySplitting}
  \begin{algorithmic}
    \STATE Given $\Delta t$ the time step, $u_h \in V^h \subset
    {\overset{\circ}{H}}^1_{A(s)}(\Omega)$ and $\mathcal{T}^h$ a triangulation
    of $\Omega$ with given average triangle size, $h$,
    \REPEAT \STATE
      \begin{enumerate}
        \item Solve
          \begin{equation}
              (\tilde{g}^{n+1/2}_h, u_h) -\Delta t\theta(K_{1,n\Delta t}\tilde{g}^{n+1/2}_h, u_h)
                = \Delta t(1-\theta)(K_{1,n\Delta t}\tilde{g}^{n}_h, u_h), 
                ~~~\forall u_h \in V^h,
              \label{eq:ReactionDiffusion}
          \end{equation}
          $$\tilde{g}^{n+1/2}_h(0,.)=\tilde{g}^{n}_h,$$
        \item Update solution
          \begin{equation}
            \tilde{g}^{n+1}_h = e^{\sigma_2\, \Delta t} \tilde{g}_h^{n+1/2} \label{eq:UpdateStep}
          \end{equation}
      \end{enumerate}
    \UNTIL{$n\cdot \Delta t = T$}
  \end{algorithmic}
\end{algorithm}

In the following corollary we introduce the error associated with the operator
splitting described in \autoref{alg:SimilaritySplitting}. If we are to preserve
the asymptotics of \eqref{eq:Kolmogorov} we would require the error associate
with the operator splitting to be zero, since eventually we would see a
divergence of the solution by operator splitting from the true solution. Luckily
with the choice of operators \eqref{eq:Coercive} and \eqref{eq:Easy} we see that the error of the
operator splitting is, in fact, zero.
\begin{corollary} \label{prop:ExactSplitting}
    The operator splitting described in \autoref{alg:SimilaritySplitting} with
    operators \autoref{eq:Coercive} and \autoref{eq:Easy} is exact.
\end{corollary}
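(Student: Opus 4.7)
The strategy is to exploit the special structure identified in Theorem~\ref{thm:Coercivity}: the operator $K_{2,s}=\sigma_2\,\mathbf{I}$ is simply multiplication by the constant $\sigma_2$, so it trivially commutes with the (in general time-dependent) operator $K_{1,s}$, which is precisely the commutator condition $[K_{1,s},K_{2,s}]=0$ already recorded in that theorem. Thus the only thing to verify in the corollary is that, at the continuous level, the Lie--Trotter splitting of $\partial_s\tilde{g}=(K_{1,s}+K_{2,s})\tilde{g}$ between $K_{1,s}$ and $K_{2,s}$ introduces no splitting error on a single step $[n\Delta t,(n+1)\Delta t]$.

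The plan is to make this rigorous by a direct change of unknown. Fix a step $[n\Delta t,(n+1)\Delta t]$ and let $\tilde{g}$ denote the exact solution of \eqref{eq:SelfSimilar} on that interval with data $\tilde{g}(n\Delta t)=\tilde{g}^{n}$. Write
\begin{equation*}
  \tilde{g}(s,\tilde{v},\tilde{z})=e^{\sigma_2(s-n\Delta t)}\,w(s,\tilde{v},\tilde{z}).
\end{equation*}
Substituting into $\partial_s\tilde{g}=K_{1,s}\tilde{g}+\sigma_2\tilde{g}$, using that $K_{1,s}$ is linear in the spatial variables (so the scalar factor $e^{\sigma_2(s-n\Delta t)}$ passes through it), the two zeroth-order terms cancel and one obtains
\begin{equation*}
  \partial_s w=K_{1,s}\,w,\qquad w(n\Delta t)=\tilde{g}^{n}.
\end{equation*}
Evaluating at $s=(n+1)\Delta t$ gives the identity $\tilde{g}((n+1)\Delta t)=e^{\sigma_2\Delta t}\,w((n+1)\Delta t)$, which is precisely the two-stage procedure of Algorithm~\ref{alg:SimilaritySplitting}: step 1 advances a function satisfying $\partial_s w=K_{1,s}w$ from $\tilde{g}^{n}$ to $w((n+1)\Delta t)=:\tilde{g}^{n+1/2}$, and step 2 applies the multiplicative update \eqref{eq:UpdateStep}.

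The main (minor) subtlety is to be clear about what the statement means by \emph{exact}. The corollary does not assert that $\tilde{g}^{n+1}_h$ coincides with the true solution of \eqref{eq:SelfSimilar} at time $(n+1)\Delta t$; the $\theta$-scheme and finite element discretisation in \eqref{eq:ReactionDiffusion} still produce their usual space--time discretisation errors. What is exact is the splitting itself: if the first sub-step produced the exact solution of $\partial_s w=K_{1,s}w$, then the second sub-step would produce the exact solution of the full equation \eqref{eq:SelfSimilar}. This is the content I would emphasise in the write-up, since without this interpretation the word ``exact'' might seem to contradict the presence of $\theta$ in step~1. Once this is spelled out, the corollary follows immediately from the computation above and from $K_{2,s}$ being a scalar, with no further analytic difficulty.
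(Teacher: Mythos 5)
Your proposal is correct, and its core observation --- that $K_{2,s}=\sigma_2\,\mathbf{I}$ is a scalar multiple of the identity, so $[K_{1,s},K_{2,s}]=0$ trivially --- is exactly the paper's argument. The paper stops there: its proof is a one-line appeal to the standard fact that commuting operators yield an exact splitting, with a citation to LeVeque. You go further by actually verifying the claim through the integrating-factor substitution $\tilde{g}=e^{\sigma_2(s-n\Delta t)}w$, reducing the full equation on one step to $\partial_s w=K_{1,s}w$ followed by the multiplicative update \eqref{eq:UpdateStep}. This buys you something real: the textbook commutator criterion is usually stated for autonomous operators, whereas $K_{1,s}$ here depends on $s$ through $A(s)=1-e^{-s}$ and $\mathbf{b}$, so the citation-based argument glosses over a (harmless but nonzero) subtlety that your explicit computation dispatches cleanly --- multiplication by a scalar function of time commutes with everything, time-dependent or not. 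Your closing clarification of what ``exact'' means (the splitting is exact, not the $\theta$-scheme or the spatial discretisation) is also a worthwhile addition that the paper leaves implicit. In short: same key idea, but your write-up is more self-contained and slightly more careful than the paper's.
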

\begin{proof}
    It suffices to show that the operators $K_{1,s} $ and $K_{2,s} $ commute
    \cite{LeVeque2007}, i.e. the Lie bracket $[K_{1,s} ,K_{2,s} ]=0$ which is obvious.
\end{proof}

\section{Numerical Results} \label{sec:Results}
In this section we compare the results of the finite element method applied to
the various forms of the Kolmogorov equation, \eqref{eq:Kolmogorov},
\eqref{eq:Rotating}, and \eqref{eq:SelfSimilar}. In this way, we demonstrate the
benefits of using the self-similarity change of variables, which include
\begin{itemize}
  \item Small space domain,
  \item Fast marching in time,
  \item Convergence to steady state.
\end{itemize}

In what follows we determine the effectiveness of each FE discretization
introduced in \autoref{sec:Methods} through comparison of $L^2$-errors and a percent
difference defined as
\begin{equation}
  \%\text{diff}(f) = \frac{\|f_{numerical} - f_{\text{exact}}\|}{\|f_{\text{exact}}\|}\cdot 100\%.
  \label{eq:Diff}
\end{equation}
The use of \%diff will show the distribution of error and thus show where the
largest errors occur. For \eqref{eq:Kolmogorov} and \eqref{eq:Rotating} the
major contribution of errors is expected to be occur on the boundary, due to the
interaction with the artificial boundary conditions. However, it is expected
that for the self-similarity solution, \autoref{eq:SelfSimilar}, the major
contribution of error should be directly from discretization error rather than
from imposed boundary conditions.

For purposes of comparing solutions and the contribution of errors from the FE
discretization we first need exact solutions to each of the different forms of
the Kolmogorov equation. To this end, we define the initial condition
\begin{equation}
  f_0(v,x) = e^{-v^2-x^2}
  \label{eq:IC}
\end{equation}
and therefore the solution to \eqref{eq:Kolmogorov}, \eqref{eq:Rotating}, and
\eqref{eq:SelfSimilar} are given by
\begin{align}
  f_{\text{exact}}(t,v,x) &= \frac{\exp\left(
            -\frac{\left(3+3 t^2+4 t^3\right) v^2+6 t (1+2 t) v x+3 (1+4 t) x^2}
            {3+12 t+4 t^3+4 t^4}
          \right)}{\sqrt{1 + 4 t + \frac{4}{3} t^2 + \frac{4}{3} t^4}},
          \label{eq:KolmogorovExact} \\
  g_{\text{exact}}(t,v,z) & = \frac{\exp\left(
            -\frac{(3 + 4 t^3) v^2 + 12 t^2 v z + 3 (1 + 4 t) z^2}
            {3 + 12 t + 4 t^3 + 4 t^4}
          \right)}{\sqrt{1 + 4 t + \frac{4}{3} t^2 + \frac{4}{3} t^4}},
          \label{eq:RotatingExact} \\
  \tilde{g}_{\text{exact}}(s,\tilde{v},\tilde{z}) &= \frac{\exp\left(
          \frac{\left(1-4 e^s \left(3+e^s
          \left(-3+e^s\right)\right)\right) v^2+12 e^s \left(-1+e^s\right)^2 v
          z-3 e^{2 s} \left(-3+4 e^s\right) z^2}
          {-9 + 8e^{s} + 12 e^{2s} - 12 e^{3s} + 4 e^{4s}}
          \right)}
          {\sqrt{-3 + \frac{8}{3} e^{s} + 4 e^{2 s} - 4 e^{3 s} + \frac{4}{3} e^{4 s}}},
          \label{eq:SimilarExact}
\end{align}
respectively. Additionally, from \autoref{thm:LongTimeBehaviorH} we see that the
solution to \eqref{eq:SelfSimilar} converges to
\begin{equation}
  \tilde{g}_{\infty}(\tilde{v},\tilde{z}) = \frac{\sqrt{3}}{2} e^{-\tilde{v}^2 +
    3\tilde{v}\,\tilde{z}-3\tilde{z}^2}
  \label{eq:HInfinity}
\end{equation}
which is an elliptic Gaussian having magnitude $\frac{\sqrt{3}}{2}$.

For the various forms of the Kolmogorov equations (\eqref{eq:Kolmogorov},
\eqref{eq:Rotating}, \eqref{eq:SelfSimilar}) we take the time interval, and problem
domain to be respectively
 $ I=[0,10],\quad \Omega = [-10,10]\times [-10,10]$,
which obviously satisfies Condition \eqref{condition}.  For each equation we take the time step to be $\Delta t = \Delta
s = 0.01$ and the number of triangles along each side of the domain, $\Omega$,
to be $N=128$.
\begin{remark}
    We note that while the starting domain for both
    \eqref{eq:Rotating} and \eqref{eq:SelfSimilar} is given by $\Omega$ the
    respective change of variables results in the domain growing over time.
    Additionally, since $s$ is a scaling of the time, $t$, we take the time
    interval for \eqref{eq:SelfSimilar} to be
        $I_s = [0,2.4]$
    which corresponds to $t\in I$, since
    $t = e^s - 1 \Rightarrow s \in [0, \log(t - 1)].$
    \end{remark}

For Equations \eqref{eq:Kolmogorov} and \eqref{eq:Rotating} the support for the
function grows beyond the problem domain in the given time interval, $I$, and
therefore the boundary conditions become more and more important as time
increases until the solution, given by the FEM, no longer approximates the true
solution of the original Cauchy problem. This can be seen in
\autoref{fig:Kolmogorov} and \autoref{fig:Rotating}. Thus, as time increases the
error becomes larger and larger, due to the diveregence from the exact solution
caused by the interaction of the boundary conditions. While Equation
\eqref{eq:SelfSimilar} tends to a steady state with compact support in the
domain $\Omega$ for the given time interval, therefore the approximation given
by the FEM remains valid throughout the simulated time and should provide a
better approximation to the exact solution for the Cauchy problem as can be seen
in \autoref{fig:SplitSimilarT2_40}.
Additionally, we see in \autoref{tab:L2} that the $L^2$-error at time $t=10$ is
smaller for the self-similarity version of Kolmogorov, \eqref{eq:SelfSimilar},
as expected. In fact, the $L^2$-error at $t=e^{10}-1$ is still smaller than the
$L^2$-errors associated with Equations \eqref{eq:Kolmogorov} and
\eqref{eq:Rotating} at $t=10$ and is
\begin{equation*}
  \|\tilde{g}_{\text{exact}}(10,\tilde{v},\tilde{z}) - \tilde{g}(10,\tilde{v},\tilde{z})\| = 0.0107995.
\end{equation*}

\begin{table}
  \centering
  \begin{tabular}{|c|c|c|}
    \hline
    \autoref{alg:TSKolmogorov} & FEM applied to \autoref{eq:Rotating} & \autoref{alg:SimilaritySplitting} \\
    \hline
    $0.0490644$ & $0.0733016$ & $0.019473$ \\
    \hline
  \end{tabular}
  \caption{$L^2$-error for the FE discretizations at $t=10$.}
  \label{tab:L2}
\end{table}

While we have no theory predicting the exact convergence of the FEM applied to
\eqref{eq:SelfSimilar} we present observed convergence rates, so as to
demonstrate that our solution is indeed a good approximation to the true
solution. To this extent we see in \autoref{tab:L2errors} that the rate of
convergence appears to follow the classical quadratic convergence rate expected
for linear finite elements and the convergence rate is given by the least
squares fit
\begin{equation*}
  E(h) = 0.49796\, h^{2.0401}.
\end{equation*}
This convergence rate can also be observed in \autoref{fig:Convergence}.

\begin{table}
  \centering
  \begin{tabular}{|c|c|c|}
    \hline
    $h$ & $L^2$-error & order \\
    \hline
   $1.00000$ &  $6.27854\times 10^{-1}$   &       $-$ \\
   $0.50000$ &  $9.90501\times 10^{-2}$   &       $2.6642$ \\
   $0.25000$ &  $2.70934\times 10^{-2}$   &       $1.8702$ \\
   $0.12500$ &  $6.94450\times 10^{-3}$   &       $1.9640$ \\
   $0.06250$ &  $1.74641\times 10^{-3}$   &       $1.9915$ \\
   $0.03125$ &  $4.36256\times 10^{-4}$   &       $2.0011$ \\
   $0.01562$ &  $1.08071\times 10^{-4}$   &       $2.0132$ \\
    \hline
  \end{tabular}
  \caption{$L^2$-errors for FEM applied to the self-similar Kolmogorov
    equation, \eqref{eq:SelfSimilar}, at $s=10$ with $dt=0.01$.}
  \label{tab:L2errors}
\end{table}

In addition to the phenomenon of decreasing $L^2$-error for the FE approximation
to \eqref{eq:SelfSimilar} we would like to bring attention back to the
phenomenon mentioned in Proposition~\ref{prop:normtg} relating to the
$L^{\infty}$-norm of \eqref{eq:SelfSimilar} not being monotonic. Indeed, the
numerical simulation of \eqref{eq:SelfSimilar} by FEM follows the same behavior
as predicted by Proposition~\ref{prop:normtg} and can be seen in
\autoref{fig:SimilarNorm}. In this simulation we observe that the initial
solution behavior was for the $L^{\infty}$-norm to increase and then eventually
decrease to a steady state as expected.

\begin{figure}[ht]
    \centering
    \subfloat[Observed rate of convergence in $L^2$-norm. The least squares fit is given by $E(h) = 0.49796\, h^{2.0401}$ \label{fig:Convergence}]{
            \includegraphics[scale=0.18]{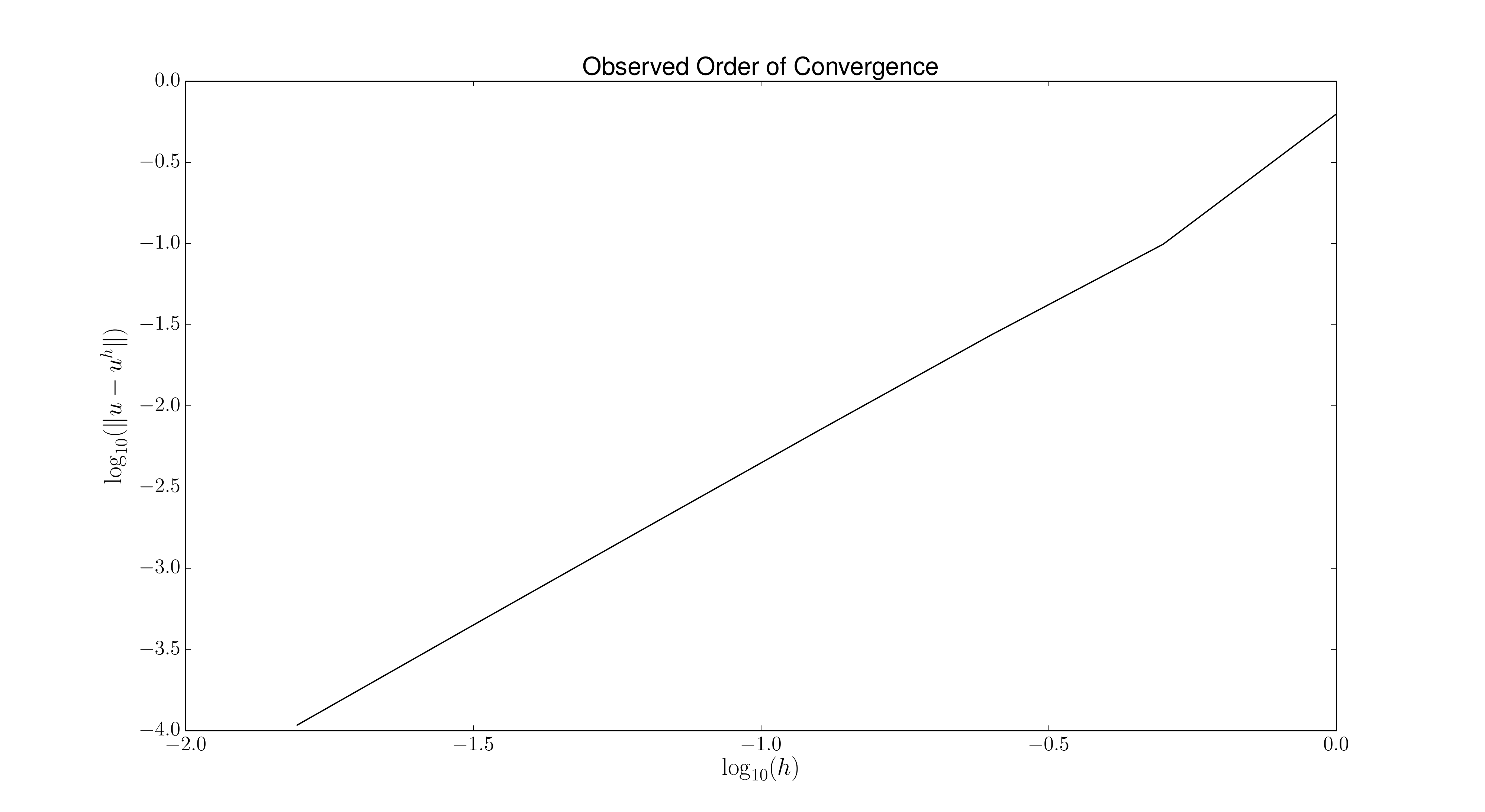}
    }
    \subfloat[Observed $L^{\infty}$-norm and $L^2$-norm over time, $s$, \label{fig:SimilarNorm} ]{
            \includegraphics[scale=0.30]{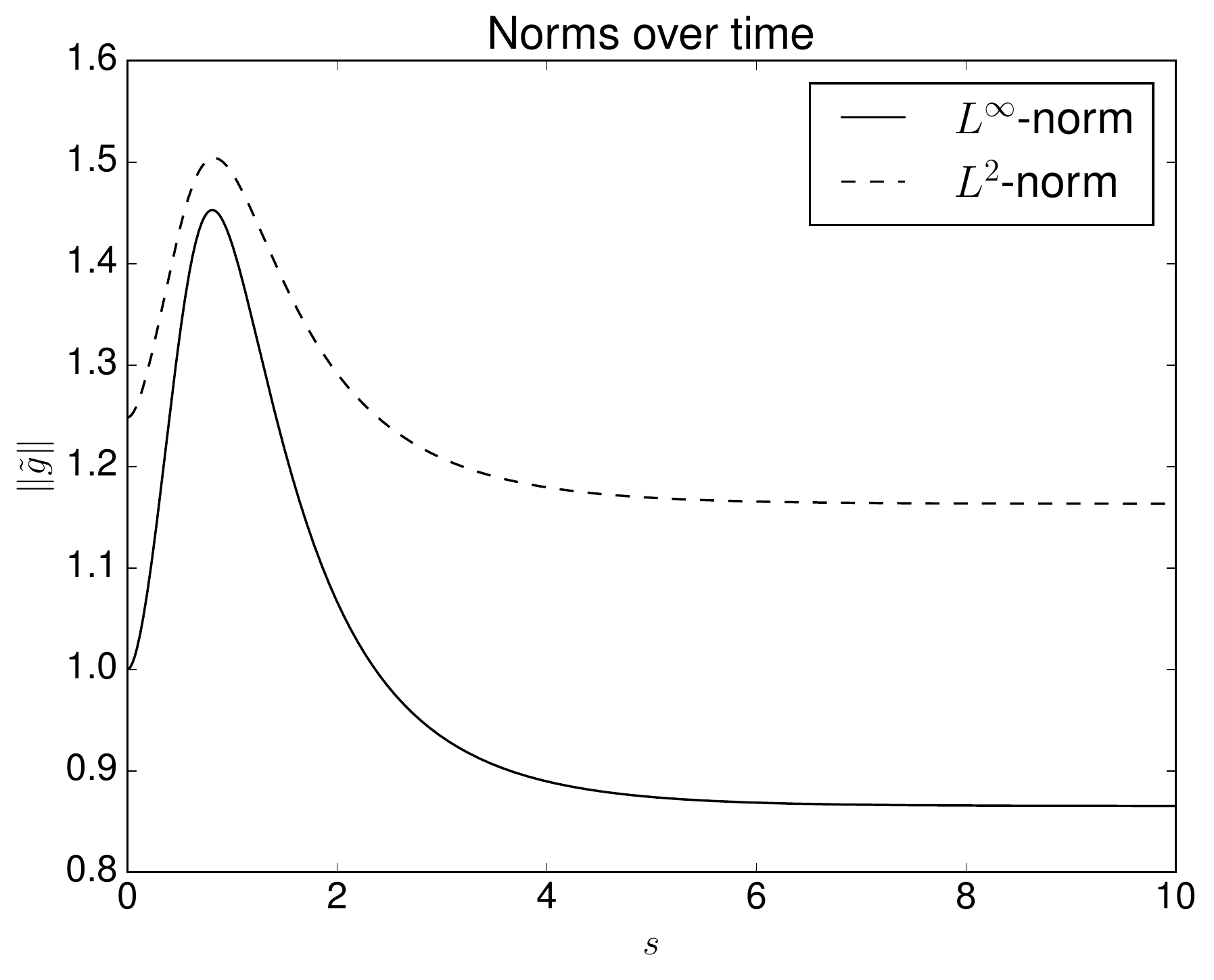}
    }
    \caption{Rate of convergence, \ref{fig:Convergence}, and the
    observed $L^{\infty}$-norm and $L^2$-norm over time, \ref{fig:SimilarNorm},
    for FEM and \autoref{alg:SimilaritySplitting} applied to
    \eqref{eq:SelfSimilar}}
        \label{fig:Norms}
\end{figure}

\begin{figure}[ht]
    \centering
    \subfloat[Simulated solution\label{sfig:Kolmogorov}]{
        \includegraphics[scale=0.40]{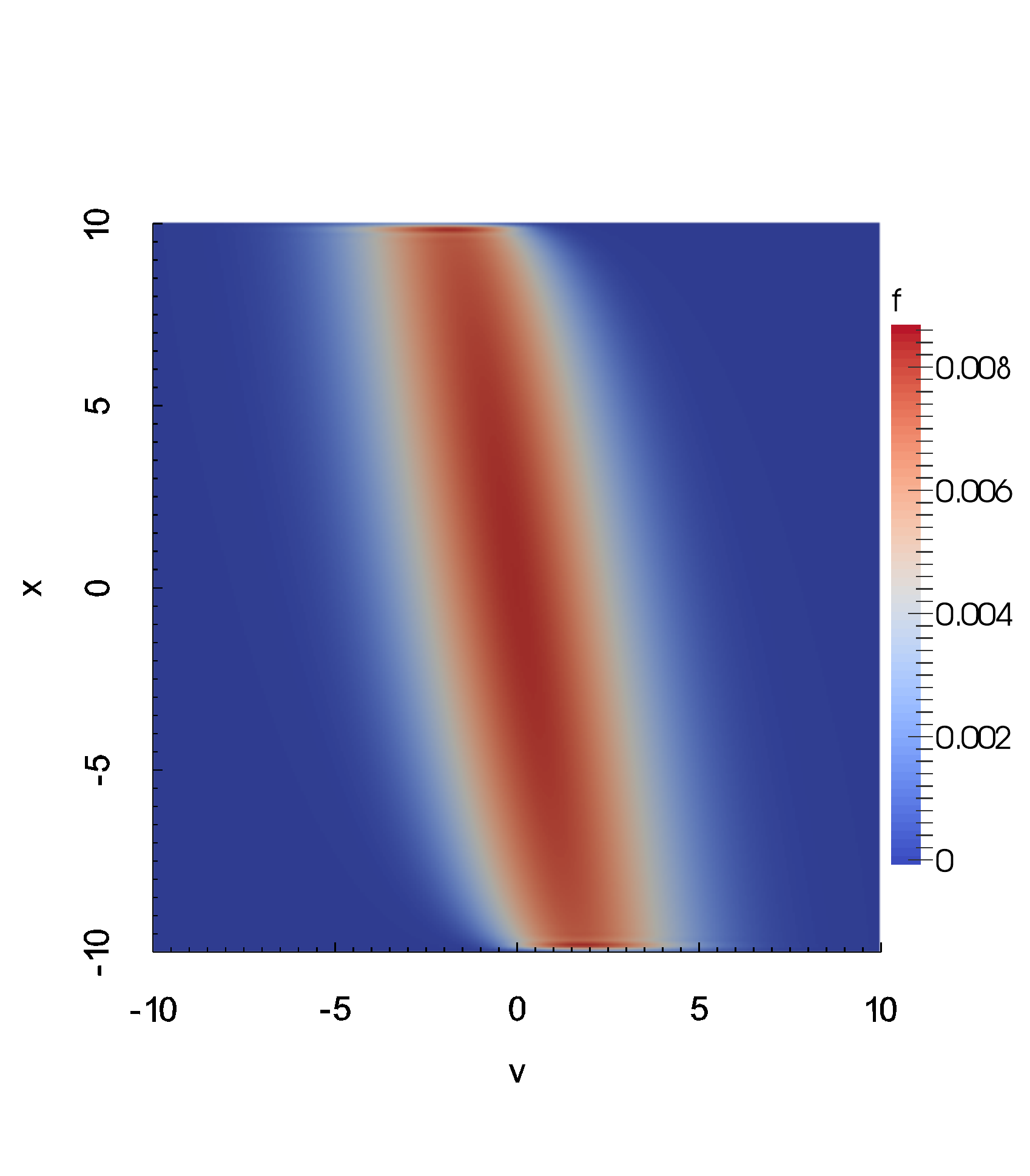}
    }
    \subfloat[Percent difference \label{sfig:DiffKolmogorov}]{
        \includegraphics[scale=0.40]{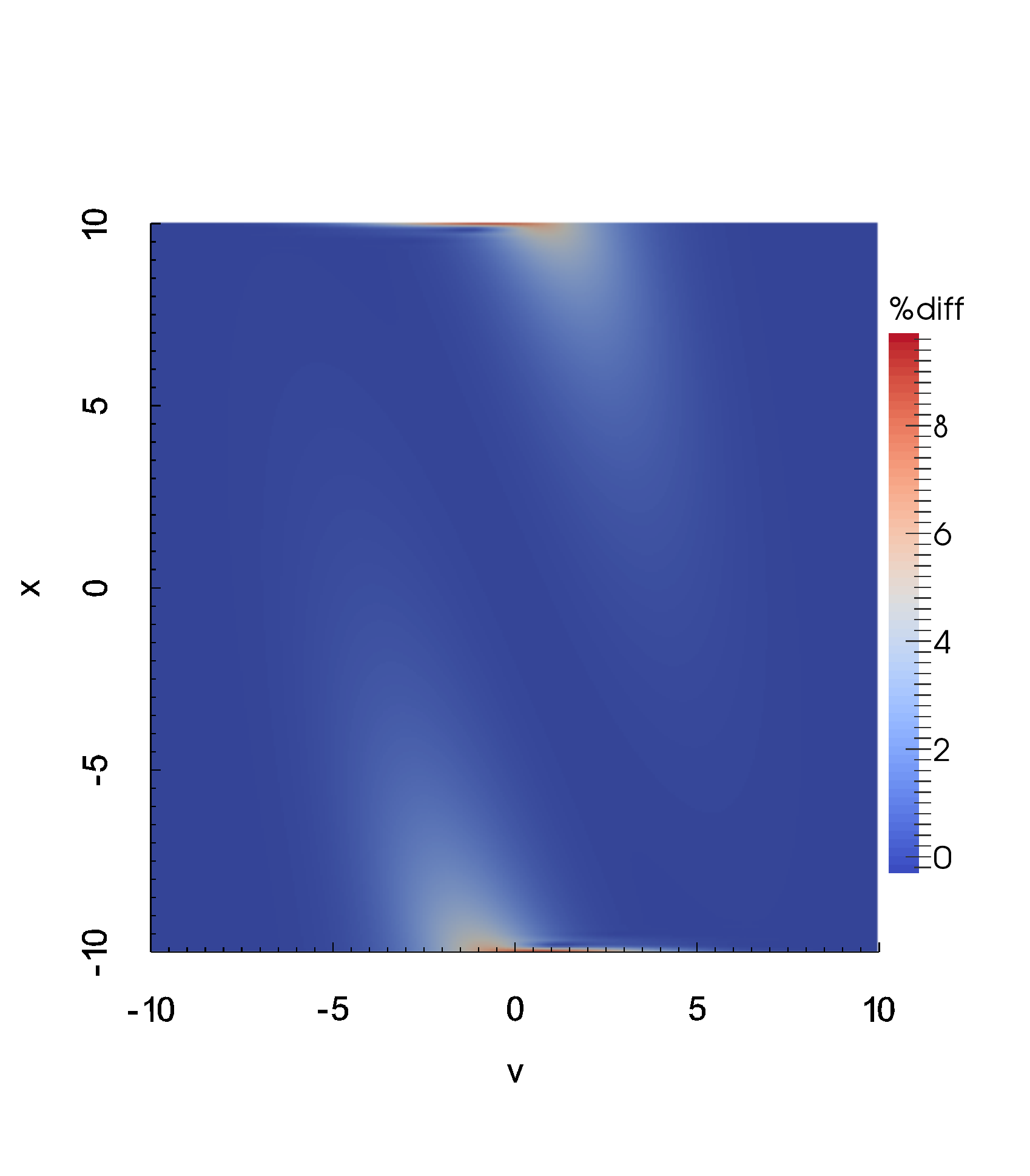}
    }
    \caption{Solution to the Kolmogorov equation,
        \eqref{eq:Kolmogorov}, simulated using \autoref{alg:TSKolmogorov}
        (\autoref{sfig:Kolmogorov}) and percent difference
        (\autoref{sfig:DiffKolmogorov}) between exact solution and
        simulated solution at $t=10$.}
    \label{fig:Kolmogorov}
\end{figure}

\begin{figure}[ht]
    \centering
    \subfloat[Simulated solution   in the new variables \label{sfig:Rotating}]{
        \includegraphics[scale=0.43]{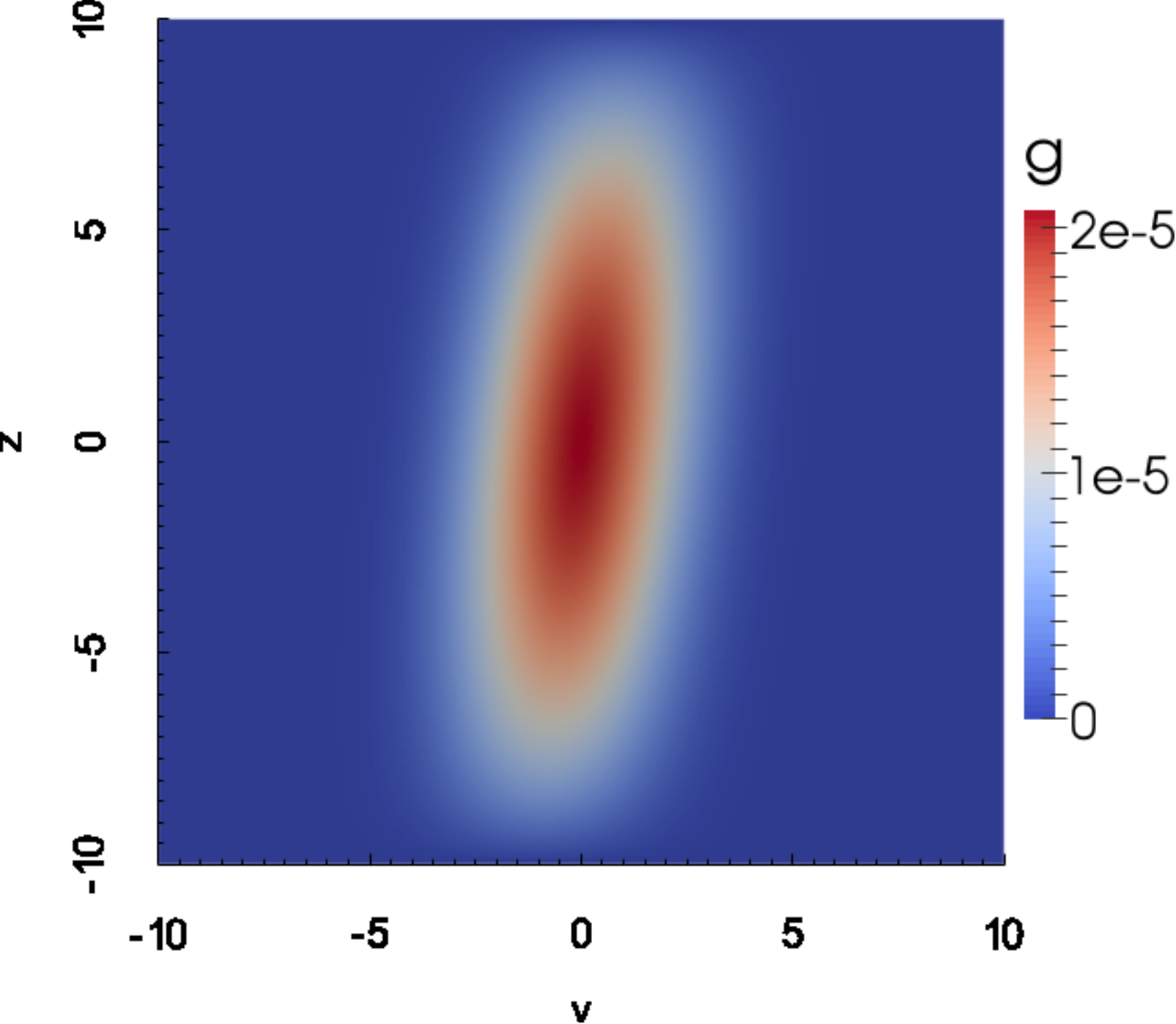}
    }
    \subfloat[Percent difference  in the new variables \label{sfig:DiffRotating}]{
        \includegraphics[scale=0.43]{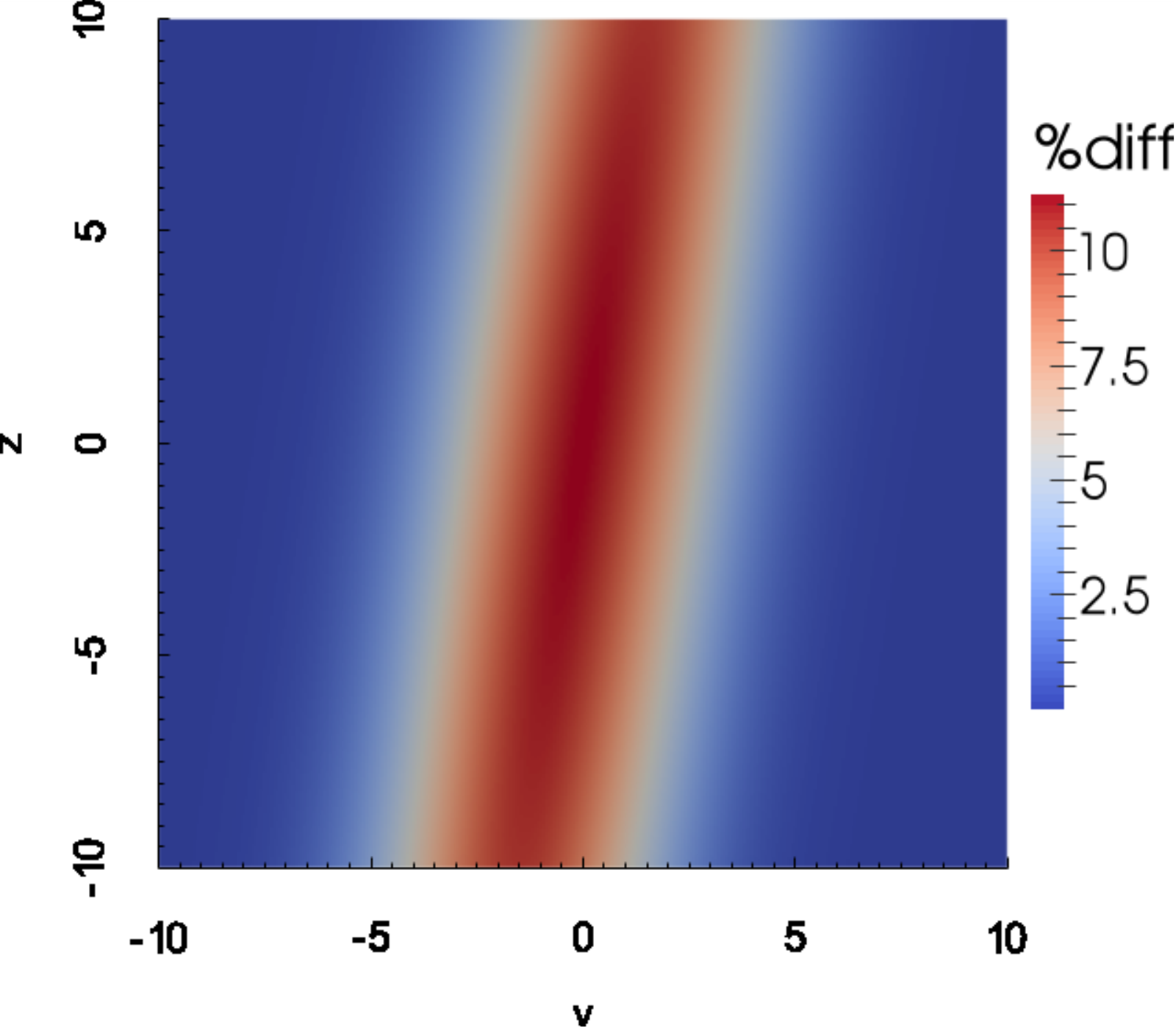}
    } \\
    \subfloat[Solution transformed back to the original variabless \label{sfig:RotatingTransformed}]{
        \includegraphics[scale=0.46]{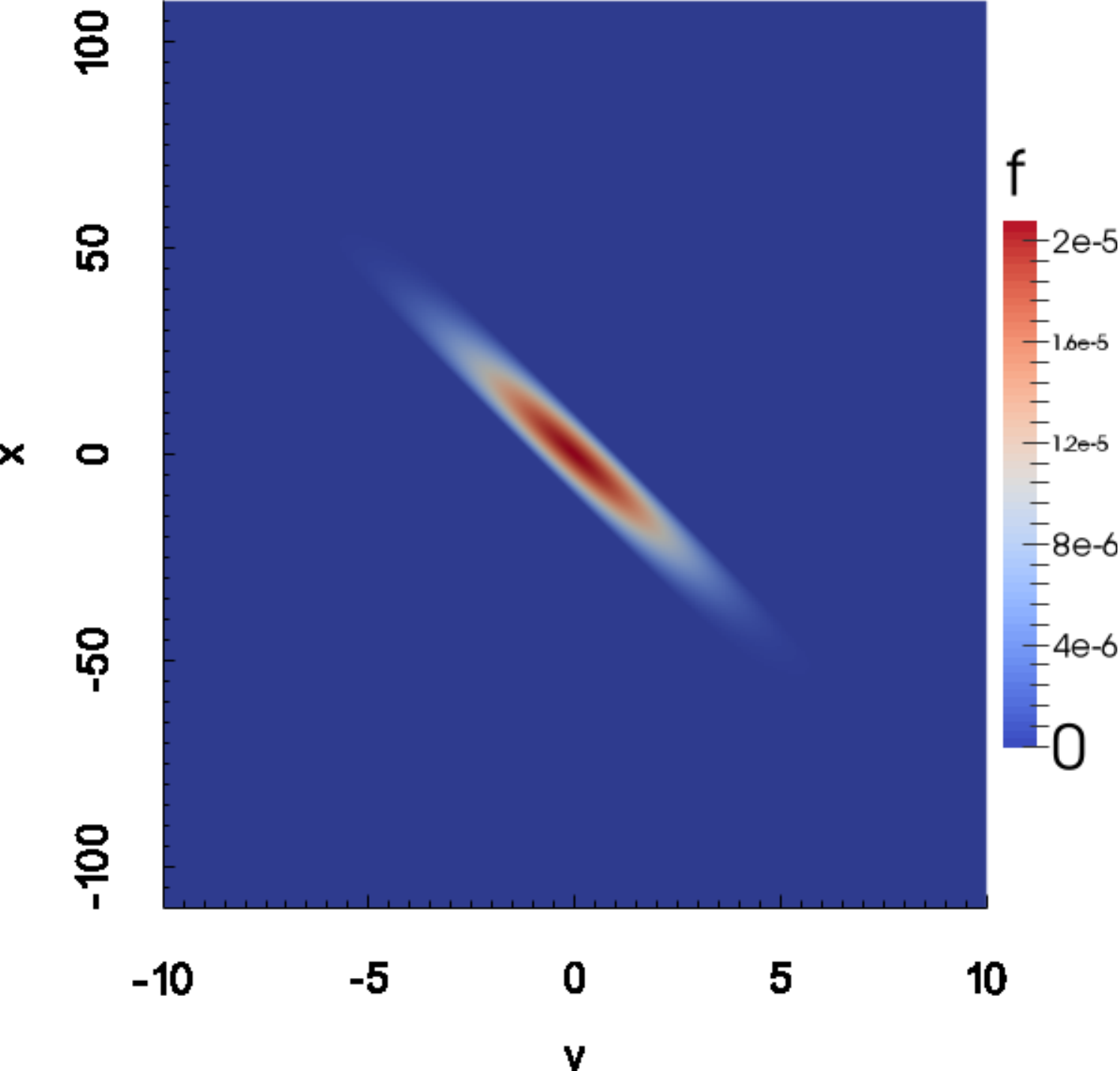}
    }
    \subfloat[Percent difference transformed back to the original variables \label{sfig:DiffRotatingTransformed}]{
        \includegraphics[scale=0.46]{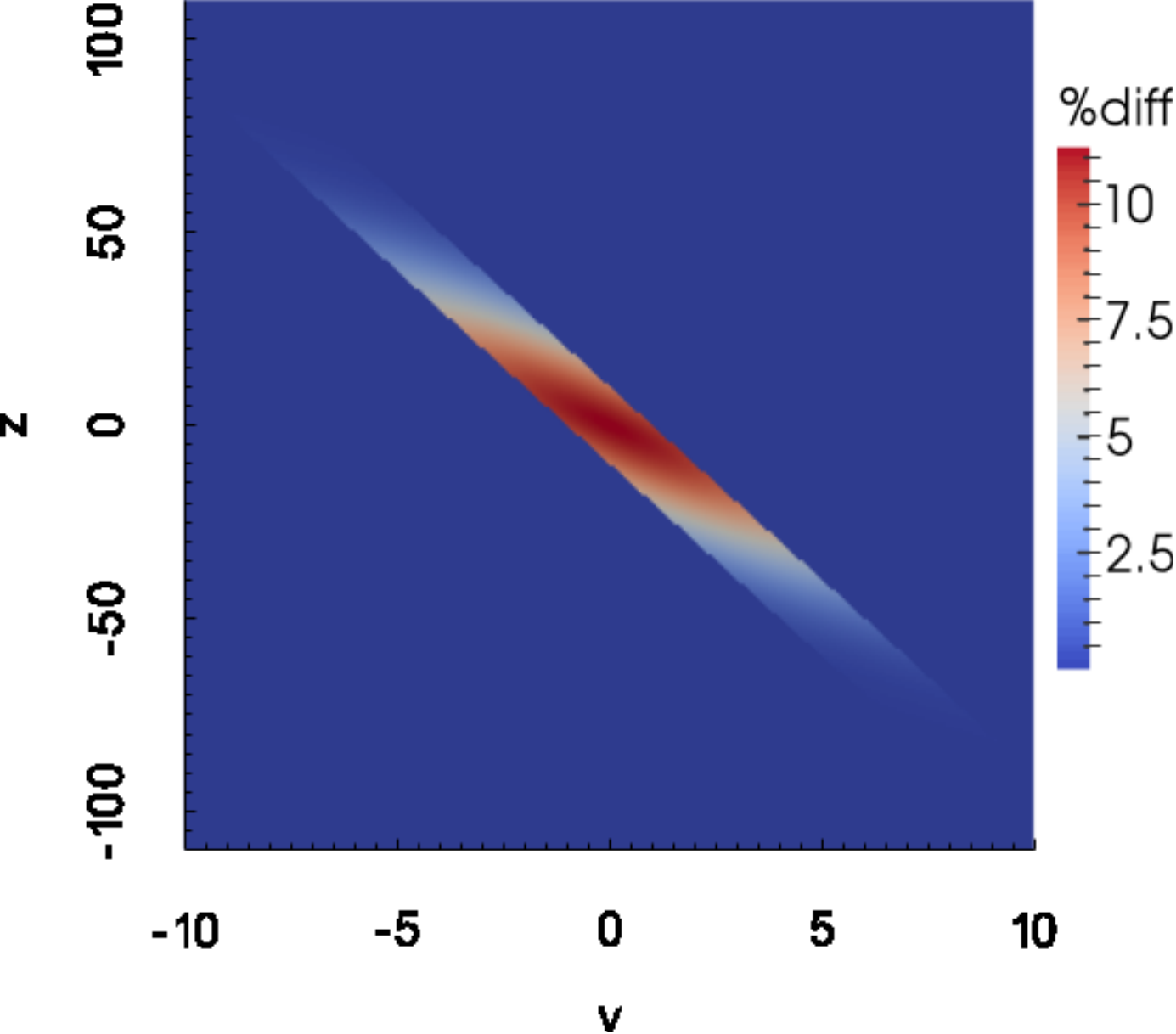}
    }
    \caption{Solution to the Kolmogorov equation with change of
        variables $z = x + t\,v$, \eqref{eq:Rotating}, using standard FEM
        (\autoref{sfig:Rotating}, \autoref{sfig:RotatingTransformed}) and
        percent difference (\autoref{sfig:DiffRotating},
        \autoref{sfig:DiffRotatingTransformed}) between exact solution and
    simulated solution at $t=10$.}
  \label{fig:Rotating}
\end{figure}

\begin{figure}[ht]
    \centering
    \subfloat[Simulated solution in the self-similar variables \label{sfig:SplitSimilarT2_40}]{
        \includegraphics[scale=0.43]{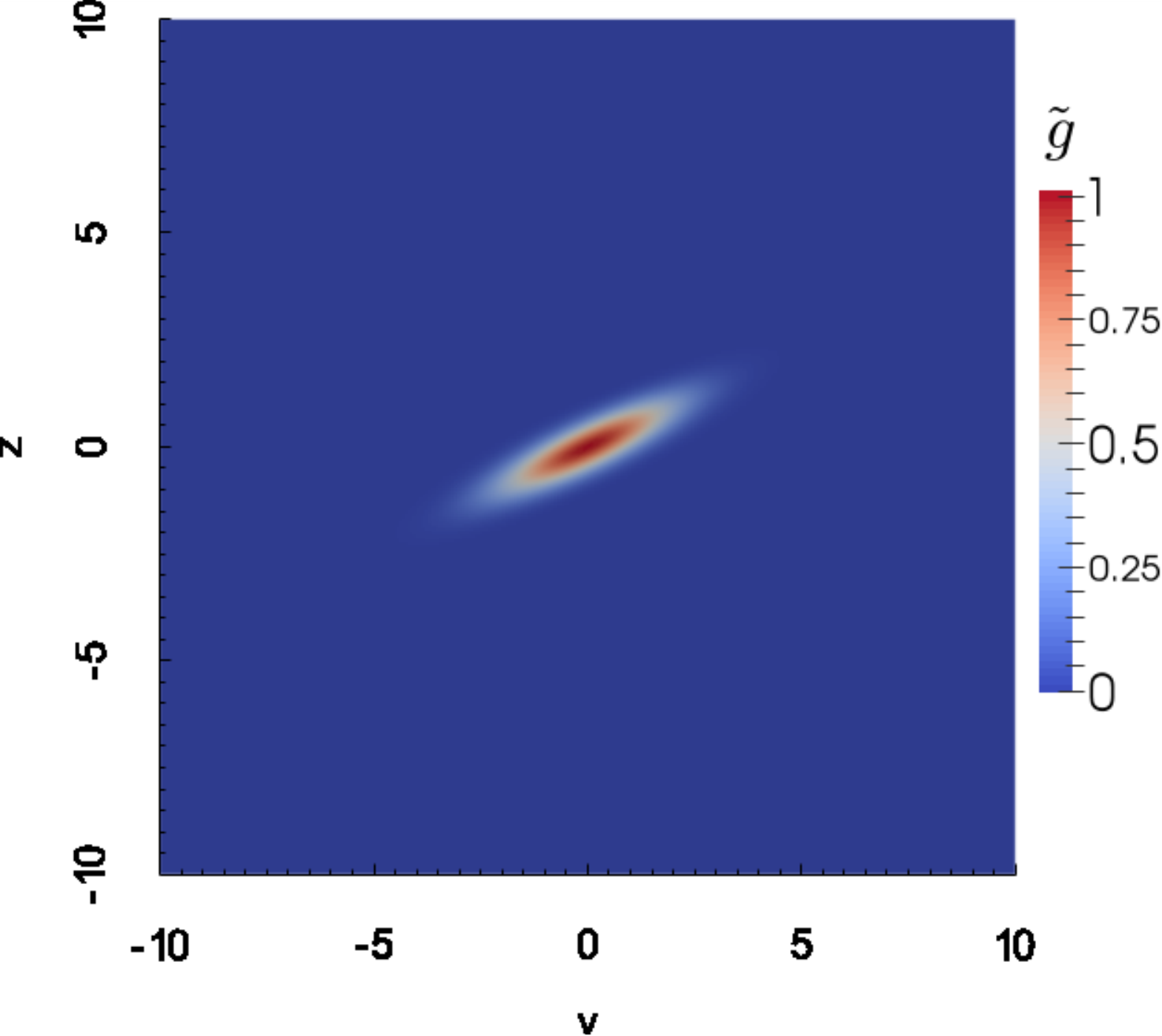}
    }
    \subfloat[Percent difference in the self-similar variables \label{sfig:SplitDiffSimilarT2_40}]{
        \includegraphics[scale=0.43]{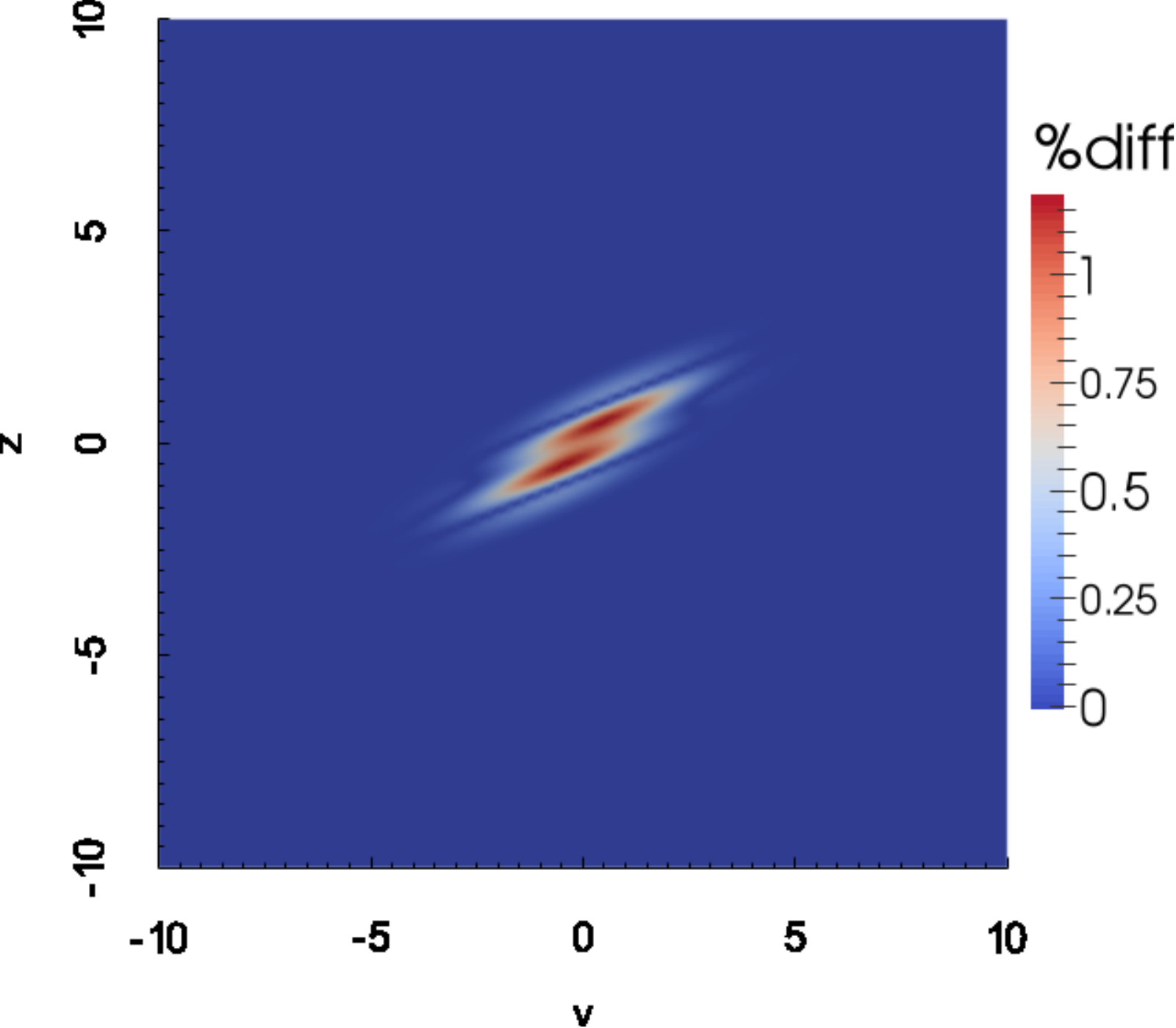}
    } \\
    \subfloat[Solution transformed back to the original variables\label{sfig:SplitSimilarT2_40Transformed}]{
        \includegraphics[scale=0.39]{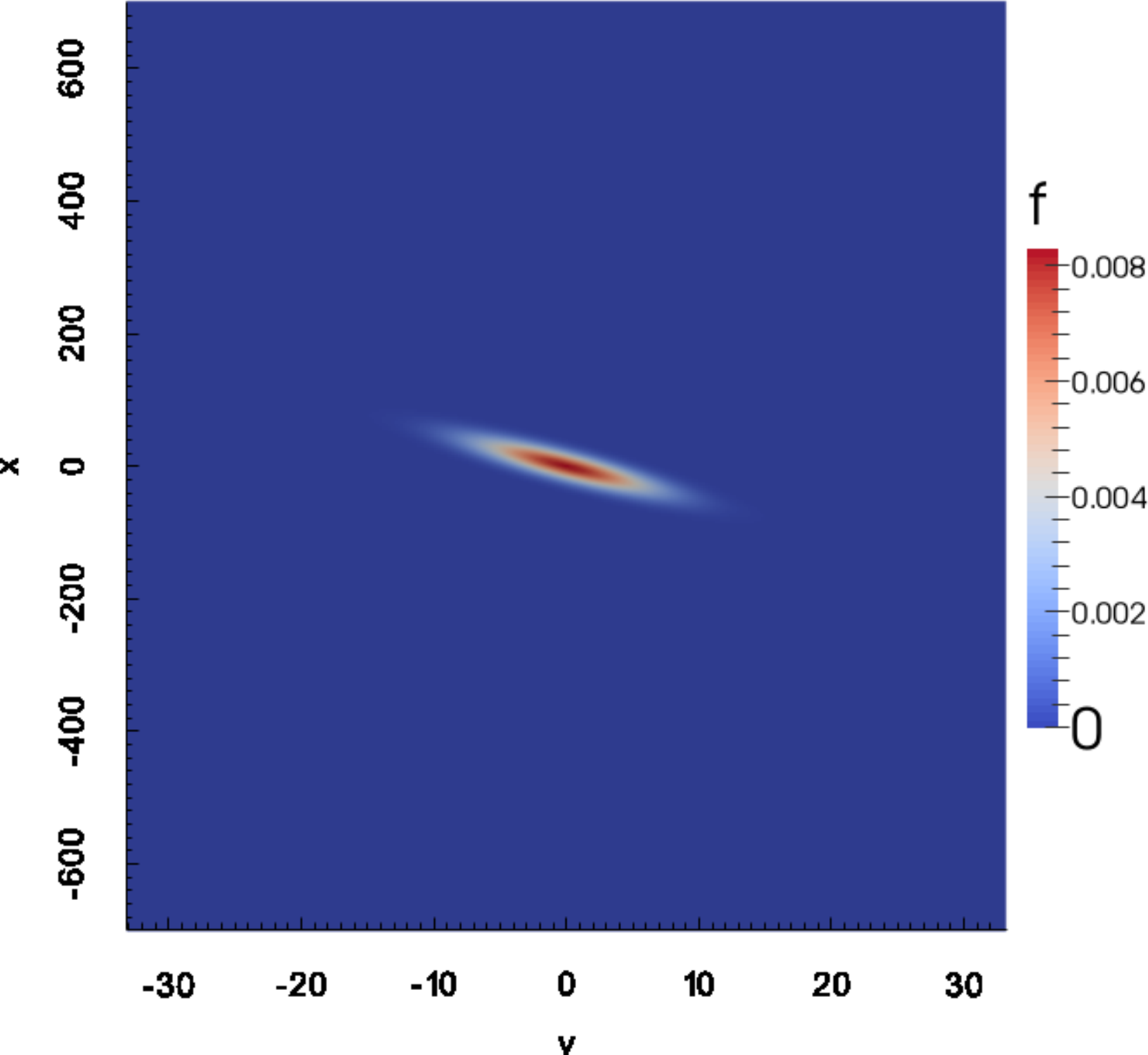}
    }
    \subfloat[Percent difference transformed back to the original variables \label{sfig:SplitDiffSimilarT2_40Transformed}]{
        \includegraphics[scale=0.39]{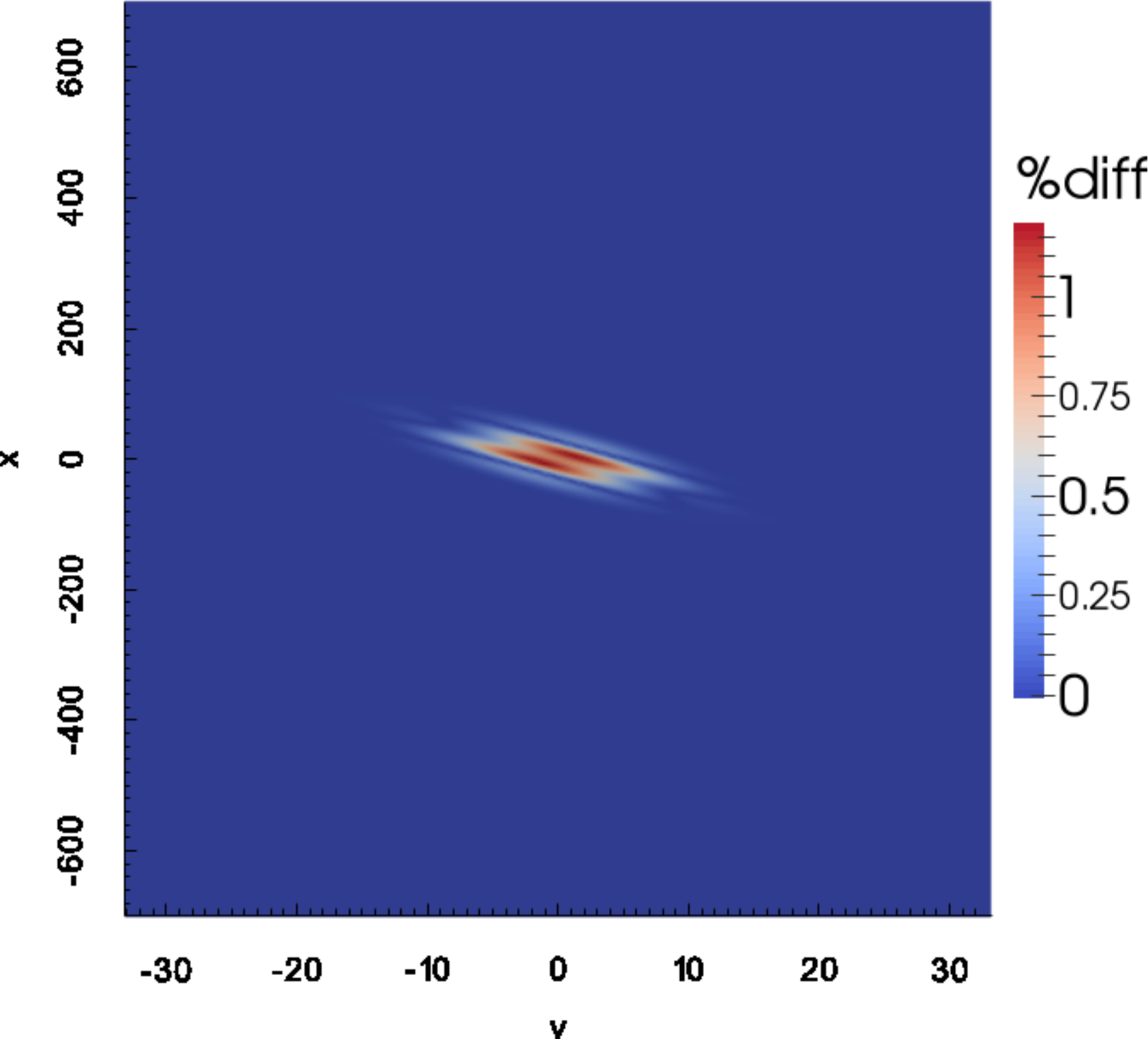}
    }
    \caption{Solution to the self-similar Kolmogorov equation,
        \eqref{eq:SelfSimilar}, using \autoref{alg:SimilaritySplitting}
        (\autoref{sfig:SplitSimilarT2_40},
        \autoref{sfig:SplitSimilarT2_40Transformed}) and percent difference
        (\autoref{sfig:SplitDiffSimilarT2_40},
        \autoref{sfig:SplitDiffSimilarT2_40Transformed}) between exact solution
        and simulated solution at $s=2.40$ ($t\sim10$).}
    \label{fig:SplitSimilarT2_40}
\end{figure}

\begin{remark}
    We note that the change of variable $z = x + t\,v$ results in the rotation
    of the initial condition in the opposite direction to the direction seen in
    the original variables. This is apparent in the both the exact solution
    given in \eqref{eq:Kernel} and the simulations presented in
    \autoref{sec:Results}, especially in Figure \ref{fig:Rotating} and in
    Figure \ref{fig:SplitSimilarT2_40}.
\end{remark}

\section{Conclusions} \label{sec:Conclusions}
In this paper we introduce a discretization of the self-similar equation
(\ref{eq:SelfSimilar}) based on an operator splitting technique combined with a
finite element method and provide theoretical results for the method. Then in
\autoref{sec:Results} we verified our theoretical results. The effectiveness of
the self-similar change of variables was demonstrated in \autoref{sec:Results},
by comparing finite element solutions for \eqref{eq:Kolmogorov} using the method
of splitting in \autoref{alg:TSKolmogorov}, the change of variables form of
Kolmogorov \eqref{eq:Rotating}, and the self-similar version of Kolmogorov
\eqref{eq:SelfSimilar}. The self-similar change of variables had the lowest
$L^2$-error as compared to the solutions for \eqref{eq:Kolmogorov} and
\eqref{eq:Rotating}. The main reason for this was due to the interaction of the
artificially imposed boundary conditions with the solution on the inside of the
domain. This is exactly as expected. We note that the self-similar change of
variables solution can also suffer from the same draw back of artificial
boundary conditions if a domain which is too small is chosen.  However, the key
point here is that the domain required is much smaller than that of
\eqref{eq:Kolmogorov} and \eqref{eq:Rotating}, allowing for efficient long time
simulation. In addition to the ability to use much smaller domains for long time simulation,
the self-similar change of variables allows for fast marching in time due to the
change in time from $t$ to $s$ where $t = e^s - 1$. Thus, time marching is
exponential which adds to the efficiency of computing solutions to the
self-similar change of variables version of the Kolmogorov equation. In summary
we see that for long time integration the self-similar change of variables has
the following benefits, as compared to the original formulation of the
Kolmogorov equation: small space domain, and fast marching in time.

\section*{Acknowledgements}
The authors would like to thank Professor Enrique Zuazua for suggesting this research topic and his great guidance while supervising this work.
This work is supported by the Advanced Grants NUMERIWAVES/FP7-246775 of the European Research Council Executive Agency, FA9550-14-1-0214 of the EOARD-AFOSR, PI2010-04 and the BERC 2014-2017 program of the Basque Government, the MTM2011-29306-C02-00, and SEV-2013-0323 Grants of the MINECO.

During this research, the second author was member of the Basque Center for Applied Mathematics and he thanks the BCAM for its hospitality and support.

\bibliographystyle{abbrv}
\bibliography{biblio2}

\end{document}